\documentclass[oneside,11pt]{amsart}
\pdfoutput=1
\usepackage[pdftex, paperwidth=210mm, paperheight=11in]{geometry}
\geometry{top=25mm, bottom=22mm, left=30mm, right=30mm, heightrounded}
\usepackage{mathtools}
\usepackage[utf8]{inputenc}
\usepackage[T1]{fontenc}
\usepackage[USenglish]{babel}
\usepackage{hyphenat}
\usepackage{accents}

\usepackage{enumitem}
\setlist[enumerate]{left=3pt, topsep=0pt}
\setenumerate[1]{label=\upshape{(\arabic*)}}

\usepackage{graphicx}
\usepackage{xspace}
	\xspaceaddexceptions{" \, \: \nb \nobreakdash \text \textup ] \}}
\usepackage{url}
\synctex=1

\usepackage[largesc]{newtxtext}
\usepackage[varg]{newtxmath}
\usepackage[scaled=1.05]{biolinum}
\renewcommand*\mathsf[1]{\textup{\textsf{#1}}}
\usepackage{mathbbol}
\usepackage{microtype}

\usepackage{xcolor}
\usepackage[pdfencoding=auto,psdextra]{hyperref}
\hypersetup{
    colorlinks,
    linkcolor={red!20!black},
    citecolor={blue!30!black},
    urlcolor={blue!40!black}
    }

\providecommand\texorpdfstring[2]{#1}

\makeatletter

\swapnumbers

\theoremstyle{plain}
    \newtheorem{theorem}[subsection]{Theorem}
    \newtheorem*{theorem*}{Theorem}
    \newtheorem{corollary}[subsection]{Corollary}
    \newtheorem*{corollary*}{Corollary}
    \newtheorem{proposition}[subsection]{Proposition}
    \newtheorem*{proposition*}{Proposition}
    \newtheorem{lemma}[subsection]{Lemma}
    \newtheorem*{lemma*}{Lemma}

    \newtheorem*{namedThm*}{\theoremname}
    \newcommand{\theoremname}{}
    \newenvironment{namedthm*}[1]
            {       \renewcommand{\theoremname}{#1}
                    \begin{namedThm*}
            }
            {       \end{namedThm*}}
    
\theoremstyle{definition}
    \newtheorem{definition}[subsection]{Definition}
    \newtheorem*{definition*}{Definition}
    \newtheorem{notation}[subsection]{Notation}
    \newtheorem*{notation*}{Notation}
    \newtheorem{convention}[subsection]{Convention}
    \newtheorem*{convention*}{Convention}
    
\theoremstyle{remark}
    \newtheorem{remark}[subsection]{Remark}
    \newtheorem*{remark*}{Remark}


\usepackage{needspace}

\newcommand*{\numberedparagraph}{\refstepcounter{subsection}\noindent\thesubsection{}.}

\newcommand*{\SideTitle}[1]{
	\bigbreak%
	\pdfbookmark[2]{\textbullet\quad#1}{\thesubsection{}}%
	\noindent{\small\textbullet}\enskip\:\textsc{#1}\smallbreak%
}

\newcommand*{\nb}{\nobreakdash}

\newcommand*{\dash}{\unskip\,\textemdash\,}


\newcommand*{\restr}{\mathord{\upharpoonright}}

\newcommand*{\medcup}{\raisebox{.30ex}{\scalebox{.90}{\ensuremath{\textstyle\bigcup}}}}
\newcommand*{\medcap}{\raisebox{.30ex}{\scalebox{.90}{\ensuremath{\textstyle\bigcap}}}}

\newcommand*{\Medcap}[2]{\EM{\medcap{_{#2}}{#1}_{#2}}}

\newcommand*{\Mediumleftrightarrow}{\mathrel{{\Leftarrow}\mkern-16mu{\Rightarrow}}}
\newcommand*{\SEquiv}{\Leftrightarrow}
\newcommand*{\LEquiv}{\mathrel{\;{\Mediumleftrightarrow}\;}}
\newcommand*{\SEquivDef}{\;\Leftrightarrow^{\textup{def}}\,}
\newcommand*{\Implies}{\mathrel{\;{=}\mkern-10mu{\Rightarrow}\;}}

\renewcommand*{\And}{\:\mathchar"3026\:}

\newcommand*{\subs}{\subseteq}
\newcommand*{\sups}{\supseteq}
\newcommand*{\X}{\times}

\newcommand*{\proves}{\mathrel{{|}\mkern-3mu{\relbar}}}

\newcommand*{\0}{\varnothing}

\newcommand*{\onto}{\twoheadrightarrow}

\newcommand*{\longsim}{\scalebox{1.3}{\ensuremath\sim}}
\newcommand*\isomap{\xrightarrow{\mkern-6mu\smash{\raisebox{-1ex}{\longsim}}\mkern-4mu}}%

\newcommand*{\In}{\,{\in}\,}

\newcommand*{\xbar}[1]{%
	\mkern1mu%
	\overline{\mkern-1mu{#1}\mkern-1mu}%
	\mkern1mu%
	{}
}

\newcommand{\EM}[1]{\bgroup\ensuremath{#1}\egroup\xspace}

\newcommand*{\@style}[2]{\EM{#1{#2}}}

\newcommand*{\cC}{\@style{\mathcal}C}
\newcommand*{\cD}{\@style{\mathcal}D}
\newcommand*{\cF}{\@style{\mathcal}F}
\newcommand*{\cI}{\@style{\mathcal}I}
\newcommand*{\cN}{\@style{\mathcal}N}
\newcommand*{\cM}{\@style{\mathcal}M}
\newcommand*{\cO}{\@style{\mathcal}O}
\newcommand*{\cP}{\@style{\mathcal}P}
\newcommand*{\cQ}{\@style{\mathcal}Q}
\newcommand*{\cS}{\@style{\mathcal}S}

\newcommand*{\bI}{\@style{\mathbb}I}
\newcommand*{\bL}{\@style{\mathbb}L}
\newcommand*{\bN}{\@style{\mathbb}N}
\newcommand*{\bV}{\@style{\mathbb}V}

\newcommand*{\sF}{\@style{\mathsf}F}

\newcommand*{\Ax}[1]{\textsc{#1}}


\newcommand*{\On}{\mathbb{O}\mathrm{n}}

\newcommand*{\Z}{\EM{\mathsf{Z}}}

\newcommand*{\ZFC}{\EM{\mathsf{ZFC}}}
\newcommand*{\ZFm}{\EM{\mathsf{ZF}^{-\!}}}

\newcommand*{\KPI}{\EM{\mathsf{KP}_{\infty}}}
\newcommand*{\KPIL}{\EM{\mathsf{KP}_{\infty} + (\mathbb{V}=\mathbb{L})}}

\newcommand*{\ZFmAleph}[1]{\EM{{\ZFm + \textup"\aleph_{#1}\textup{ exists"}}}}

\newcommand*{\ModelZFmAleph}[2]{\EM{#1 \models \ZFmAleph{#2}}}

\newcommand*{\omegaMod}{$\omega$-model\xspace}

\newcommand*{\wCK}{\EM{\omega_1^{\scriptscriptstyle\mathsf{CK}}}}

\DeclareMathOperator{\Dom}{\operatorname{Dom}}
\DeclareMathOperator{\Img}{\operatorname{Im}}
\newcommand{\Card}{\operatorname{Card}^{\ast\mkern-1mu}}
\newcommand{\cardinal}{cardinal\EM{^{\ast\mkern1mu}}}
\DeclareMathOperator{\Th}{\operatorname{Th}}
\DeclareMathOperator{\Cone}{\operatorname{Cone}}
\newcommand*{\unique}{\boldsymbol{\iota}\mkern.8mu}

\newcommand*{\ThL}[1]{\EM{\Th(\mathbb{L}_{#1}})}

\newcommand*{\Hull}[1]{\EM{\mathsf{H}^{\mathbb{L}_{#1}}\mkern-1mu}}
\newcommand*{\tHull}[1]{\EM{\xbar{\mathsf{H}}^{\mathbb{L}_{#1}}\mkern-1mu}}

\newcommand*{\seq}[1]{\bgroup{#1}^{< \omega}\egroup}

\newcommand*{\rec}{\leqslant_\mathsf{T}}

\newcommand*{\eqT}{\equiv_\mathsf{T}}

\newcommand*{\hypr}{\leqslant_\mathsf{h}}

\newcommand*{\eqH}{\mathrel{\equiv_\mathsf{h}}}

\newcommand*{\eq}[1]{\equiv_{#1}}

\newcommand*{\Triangle}{\vartriangleleft}

\newcommand*{\pWF}[1]{\EM{\operatorname{pseudo-WF}(#1)}}

\newcommand*{\All}[1]{\forall #1 \mkern1mu}
\newcommand*{\Exists}[1]{\exists #1 \mkern1mu}

\newcommand*{\Set}[2]{\EM{\{\mkern1mu {#1} \mid #2 \mkern1mu\}}}

\newcommand*{\Seq}[2]{\EM{(#1_#2)_{#2 < \omega}}}

\newcommand*{\Det}[1]{\EM{\operatorname{Det}(#1)}}
\newcommand*{\TDet}[1]{\EM{\operatorname{Turing-Det}(#1)}}
\newcommand*{\HTDet}[1]{\EM{\operatorname{Hyp-Turing-Det}(#1)}}
\newcommand*{\WTDet}[2]{\EM{\operatorname{Weak-Turing-Det}_{#1}(#2)}}

\newcommand*{\game}[1]{\EM{\textup{G}_\omega (#1)}}

\renewcommand*{\*}[1]{\bgroup\boldsymbol#1\egroup}%
\renewcommand*{\=}[1]{\bgroup\underline#1\egroup}%

\newcommand*{\up}[1]{\bgroup\textup{#1}\egroup}

\newcommand*{\surj}[1]{\mathrel{\gg^{\mkern-4mu{#1}}}}

\newcommand*{\pI}{Player~\textup{I}\xspace}
\newcommand*{\pII}{Player~\textup{II}\xspace}

\newcommand*{\SSigma}[2]{\EM{\Sigma^{#1}_{#2}}}
\newcommand*{\PPi}[2]{\EM{\Pi^{#1}_{#2}}}
\newcommand*{\DDelta}[2]{\EM{\Delta^{#1}_{#2}}}

\renewcommand*{\L}[1]{\EM{\mathbb{L}_{#1}}}
\newcommand*{\LL}[2]{\EM{\mathbb{L}_{#1}^{\!#2}}}
\newcommand*{\Lcard}[1]{\L{#1}\nb-\cardinal}

\newcommand*{\Aleph}[1]{\EM{\aleph_{#1}}}
\newcommand*{\AlephL}[2]{\EM{\aleph_{#1}^{\mathbb{L}_{#2}}}}

\newcommand*{\MM}[1]{\bgroup\ensuremath{\mathcal{M}_{#1}}\egroup}
\newcommand*{\inM}[1]{\mathrel{\in^\MM{#1}}}
\newcommand*{\TT}[1]{\EM{\mathsf{T}_{\mkern-2mu#1}}}
\renewcommand*{\SS}[1]{\EM{\mathcal{S}_{#1}}}

\renewcommand*{\phi}{\varphi}

\renewcommand*{\le}{\leqslant}

\renewcommand*{\ge}{\geqslant}

\newcommand*\Cpr{\looseness=-1}

\newcommand\ITEM[2][\smallbullet]{
    \begin{itemize}[leftmargin=1em, topsep=1pt, label=#1]
    	\item  {#2}
    \end{itemize}
}

\newcommand*\nudge[2]{\kern-#1 #2 \kern#1}

\begingroup
    \catcode`"=\active %
    \gdef"{\textup{\char34}}%
    \AtBeginDocument{\mathcode`"=\string"8000 }
\endgroup

\begingroup
    \global\mathchardef\normalcolon=\mathcode`: %

    \xdef\colon{\mathcode`:=\the\mathcode`: \colon}
    \catcode`:=\active %
    \global\let:=\colon
    \AtBeginDocument{\mathcode`:=\string"8000 }
\endgroup

\newcommand*\Maketitle{%
    \begingroup
        \def\uppercasenonmath##1{}
        \let\MakeUppercase\normalsize
        \global\let\@shorttitle\shorttitle
        \gdef\shorttitle{\small\@shorttitle}
        \global\let\@authors\authors
        \gdef\authors{\small\sc\@authors}
        \maketitle
    \endgroup
}
\makeatother

\begin{document}
\title				{Variations on $\Delta^1_1$ Determinacy and $\aleph_{\omega_1}$}
\thanks			{Presented at the 12\textsuperscript{th} Panhellenic Logic Symposium, Crete, June 2019. 
					To appear in the Journal of Symbolic Logic 2022\Cpr}
\author			{Ramez L. Sami}
\subjclass[2020]	{Primary: 03E60;	Secondary: 03E15, 03E10.}
\keywords		{Borel games, determinacy, transitive models}
\address			{Department of Mathematics. Université de Paris, 75205 Paris, Cedex 13, France.}
\email			{\Small\sf sami@univ-paris-diderot.fr}

\newgeometry{left=26mm, right=28mm}
\begin{abstract}
    \parindent=0pt
    We consider a seemingly weaker form of $\Delta^1_1$ Turing determinacy. 
    
    Let $2 \leqslant \rho < \omega_1^\textsc{ck}$, $\textrm{Weak-Turing-Det}_\rho (\Delta^1_1)$ is the statement:
    
        \quad \emph{Every $\Delta^1_1$ set of reals cofinal in the Turing degrees contains two Turing distinct, $\Delta^0_\rho$-equivalent reals.}
    
    We show in $\mathsf{ZF}^-$:

        \quad$\textrm{Weak-Turing-Det}_\rho (\Delta^1_1)$ implies: 
        for every $\nu < \omega_1^\textsc{ck}$ there is a transitive model: $M \models \mathsf{ZF}^- + \textup"\aleph_\nu \textup{ exists"}$.
    
    As a corollary:
    \begin{itemize}[leftmargin=1em]
        \item[] If every cofinal $\Delta^1_1$ set of Turing degrees contains both a degree and its jump, 
        then for every $\nu < \omega_1^\textsc{ck}$, there is a transitive model: $M \models \mathsf{ZF}^- + \textrm"\aleph_\nu \textrm{ exists"}$.
    \end{itemize}
    
    \begin{itemize}[leftmargin=1em, topsep=3pt, itemsep=1pt, label={\Large$\cdot$}]
        \item 	With a simple proof, this improves upon a well-known result of Harvey Friedman on the strength of Borel determinacy 
        		(though not assessed level-by-level).
        
        \item 	Invoking Tony Martin's proof of Borel determinacy, $\textrm{Weak-Turing-Det}_\rho (\Delta^1_1)$ implies $\Delta^1_1$ 			determinacy.
        
        \item 	We show further that, assuming $\Delta^1_1$ Turing determinacy, or Borel Turing determinacy, as needed:\\
            --\;	Every cofinal $\Sigma^1_1$ set of Turing degrees contains a ``hyp-Turing cone''\,: 
            			$\{\,x \in \mathcal{D} \mid d_0 \leqslant_\mathsf{T} x \leqslant_\mathsf{h} d_0 \}$.\\
            --\;	For a sequence $(A_k)_{k < \omega}$ of analytic sets of Turing degrees, each cofinal in $\mathcal{D}$, 
            			$\bigcap_k A_k$ is cofinal in $\mathcal{D}$.
    \end{itemize}
\end{abstract}
\restoregeometry

\vspace*{-0pt}
\pdfbookmark[1]{Variations on $\Delta^1_1$ Determinacy and $\aleph_{\omega_1}$}{}
\Maketitle

\section*{Introduction}
A most important result in the study of infinite games is Harvey Friedman's \cite{Fri71}, where it is shown that a proof of determinacy, for Borel games, would require \Aleph1 iterations of the power set operation \dash and this is precisely what Tony Martin used in his landmark proof~\cite{Mar75}. 

Our focus here is on the Turing determinacy results of \cite{Fri71}, concentrating instead on the theory \ZFm, rather than Zermelo's \Z. 
In the \DDelta11 realm, Friedman essentially shows that the determinacy of Turing closed \DDelta11 games \dash henceforth, \TDet{\DDelta11} \dash implies the consistency of the theories \ZFmAleph{\nu}, for all $\nu < \wCK$. 
He does produce a level-by-level analysis entailing, e.g., that the determinacy of Turing closed \SSigma{0}{n+6} games implies the consistency of \ZFmAleph{n}.%
    \footnote{\,Improved by Martin to \SSigma{0}{n+5}.}%
\textsuperscript{,}%
    \footnote{\,In \cite{M-S12} Montalbán and Shore greatly refine the analysis of the proof-theoretic strength of \Det{\Gamma}, 
    where $\PPi03 \subs \Gamma \subs \DDelta04$.}

Importantly, it was further observed by Friedman (unpublished) that these results extend to produce transitive models, rather than just consistency statements. 
See Martin's forthcoming~book~\cite{Mar00} for details, see also Van~Wesep's~\cite{Wes00}.
\smallbreak

We forgo in this paper the level-by-level analysis to provide in §\ref{sec:Transitive-models} a simple proof 
of the existence of transitive models of \ZFm with uncountable cardinals, from \TDet{\DDelta11}. 
In so doing, we show that the full force of Turing determinacy isn't needed. The main result is Theorem \ref{thm:Models-with-Alephs}, 
with a simply stated corollary. 
For context, by Martin's Lemma (see \ref{subsec:Turing-determinacy}), \TDet{\DDelta11} is equivalent to:
\ITEM{\emph{Every cofinal \DDelta11 set of Turing degrees contains a cone of degrees}, i.e., a set \Set{x \in \cD}{d_0 \rec x}.}

\begin{theorem*}[\ref{thm:Models-with-Alephs}]
    Let $2 \le \rho < \wCK$, and assume every \DDelta11 set of reals, cofinal in the Turing degrees, contains two Turing distinct, 
    \DDelta{0}{\rho}-equivalent reals. 
    For every $\nu < \wCK$, there is a transitive model\up: \ModelZFmAleph{M}{\nu}.
\end{theorem*}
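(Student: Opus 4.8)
The plan is a direct induction on $\nu < \wCK$, at each stage feeding \WTDet{\rho}{\DDelta11} a purpose-built cofinal \DDelta11 set and decoding the two Turing-distinct, \DDelta{0}{\rho}-equivalent reals it must contain into a level of the constructible hierarchy carrying one more cardinal than the stage below. The engine is a preservation lemma I would establish first: if $x$ and $y$ are \DDelta{0}{\rho}-equivalent with $2 \le \rho < \wCK$, then $\omega_1^{x} = \omega_1^{y} =: \lambda$, and moreover $y \in \L{\lambda}[x]$ and $x \in \L{\lambda}[y]$. Indeed each iterate $x^{(\beta)}$ with $\beta < \rho \le \wCK \le \omega_1^{x}$ is hyperarithmetic in $x$ and so lies in the least admissible set $\L{\omega_1^{x}}[x]$; hence a \DDelta{0}{\rho}-reduction neither moves the relativized Church--Kleene ordinal nor escapes the associated admissible level, so two such reals are mutually constructible below a common $\lambda$ while possibly remaining Turing-distinct --- exactly the configuration realized by a degree and its jump.

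With this lemma in hand I would phrase each stage as a \emph{tallness} step: to pass the induction from $\mu < \nu$ to $\nu$ it suffices to exhibit an ordinal $\Theta$ with \ModelZFmAleph{\L{\Theta}}{\nu}, since then \L{\Theta} is itself a transitive model (and a Skolem hull \hull{\Theta} with transitive collapse \thull{\Theta} gives a countable one by condensation). To obtain such a $\Theta$ I build a cofinal \DDelta11 set $A$ whose members are canonical master codes $x$ for the least relative level $\L{\gamma_x}[d_x]$ over an auxiliary degree $d_x \rec x$ read off $x$, where $\L{\gamma_x}[d_x]$ already realizes the predecessor theory supplied by the induction hypothesis. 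Cofinality in the Turing degrees is arranged by placing, above any prescribed $z$, the master code of such a minimal level over a degree above $z$; the induction hypothesis is exactly what guarantees these minimal levels exist, and limit stages $\nu$ are handled by amalgamating the coded sequence of lower levels into a single admissible set.

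Now apply the hypothesis to $A$ to obtain Turing-distinct, \DDelta{0}{\rho}-equivalent $x, y \in A$. By the preservation lemma they share $\lambda = \omega_1^{x} = \omega_1^{y}$ and are mutually constructible below it, so their minimal levels $\L{\gamma_x}[d_x]$ and $\L{\gamma_y}[d_y]$ both sit inside the common admissible $\L{\lambda}[x] = \L{\lambda}[y]$. Comparing the two by condensation --- lining up their transitive collapses inside $\L{\lambda}[x]$ --- I expect to exhibit the ordinal height of one as an ordinal that the larger level sees as a cardinal, that is, with no surjection onto it from below; adjoining this new cardinal to the predecessor theory yields a level modeling \ZFmAleph{\nu}, completing the stage. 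The Turing-distinctness of $x$ and $y$ is what forbids the two levels from coinciding and so is what makes the new cardinal genuinely new.

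The step I expect to resist is this comparison-and-non-collapse: showing that a \DDelta{0}{\rho}-reduction, with $\rho$ a \emph{fixed} recursive ordinal, cannot define a surjection collapsing the candidate cardinal $\kappa$ even as the coded ordinals $\gamma_x$ outrun $\rho$ at every stage. It all rests on one boundedness estimate --- that a $\beta$-jump with $\beta < \rho < \wCK$ is definable over $\L{\lambda}[x]$ far below $\lambda$ and is therefore too weak to witness $|\eta| < \kappa$ for the relevant $\eta$ --- and the difficulty is to deploy this uniformly through the induction and across limit $\nu$, while at the same time certifying that the engineered set $A$ is genuinely \DDelta11 and Turing-cofinal. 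I would expect the tension between the fixed reduction scale $\rho$ and the unbounded ordinal scale $\gamma_x$, rather than any single step, to be the crux of the argument.
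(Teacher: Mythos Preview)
Your proposal has a genuine gap at precisely the point you flag as the crux: the comparison-and-non-collapse step. You obtain two master codes $x,y$ that are Turing-distinct and $\equiv_\rho$-equivalent, hence $\equiv_h$-equivalent, and their coded levels $\L{\gamma_x}[d_x]$, $\L{\gamma_y}[d_y]$ lie in a common admissible set over $x$. But nothing in this configuration forces either $\gamma_x$ to be a cardinal of $\L{\gamma_y}[d_y]$ or vice versa. A surjection $f\colon \eta \onto \gamma_x$ living in $\L{\gamma_y}[d_y]$ is an arbitrary element of that structure; it has no reason to be, or to be coded by, a \DDelta{0}{\rho} reduction between reals, so your boundedness estimate \dash that $\beta$-jumps with $\beta<\rho$ are definable far below $\lambda$ \dash simply does not speak to whether such an $f$ exists. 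Turing-distinctness of $x$ and $y$ rules out the two codes being equal, but master codes over different parameters $d_x \neq d_y$ can be Turing-distinct while $\gamma_x = \gamma_y$, and even when the heights differ there is no obstruction to the larger level collapsing the smaller's height. The induction never gets off the ground without a mechanism linking ``Turing-distinct codes'' to ``a new cardinal appears'', and you have not supplied one; nor is it clear how to certify your set $A$ as \DDelta11 once relativized minimal levels and auxiliary degrees are packed into the coding.

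The paper avoids this entirely by arguing by contradiction rather than induction, and by working with \emph{theories} rather than master codes. Assuming (under $\bV = \bL$) that no limit $\lambda$ carries $\AlephL{\nu+1}{\lambda}$, every admissible $\L{\alpha}$ satisfies a fixed theory $\T{\nu} = \KPIL + {}$``$\Aleph{\nu+1}$ never appears'', and the set $\S{\nu}$ of complete extensions $U \supseteq \T{\nu}$ whose term model $\M{U}$ is pseudo-wellfounded is a cofinal \SSigma11 set. The key lemma is a rigidity property: because any $\omega$-model of $\T{\nu}$ has its internal cardinal class externally wellordered of length $\le \nu+1$, isomorphisms between initial $\bL$-segments of two such models are unique, hence \DDelta11 in the pair of theories; a back-and-forth comparison, using pseudo-wellfoundedness to find minima of the unmatched ordinals, then shows that two $\equiv_h$-equivalent members of $\S{\nu}$ have isomorphic term models and are therefore \emph{equal} as theories. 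Now \WTDet{\rho}{\SSigma11} (lifted from \DDelta11 by reflection) yields two Turing-distinct, $\equiv_\rho$-equivalent members of $\S{\nu}$ \dash an immediate contradiction, since $\equiv_\rho$ implies $\equiv_h$. The passage from ``same hyperdegree'' to ``literally equal'' is where all the work lies, and it is done through the bounded-cardinal-class rigidity of models of $\T{\nu}$, not through any cardinal-preservation estimate on the reals themselves.
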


\begin{corollary*}[\ref{cor:Models-with-Alephs}]
    If every cofinal \DDelta11 set of Turing degrees contains both a degree and its jump, then for every $\nu < \wCK$, 
    there is a transitive model\up: \ModelZFmAleph{M}{\nu}.
\end{corollary*}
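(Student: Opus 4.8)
The plan is to derive the corollary from Theorem~\ref{thm:Models-with-Alephs} by specializing to $\rho=2$. The two statements share the very same conclusion --- for every $\nu<\wCK$ a transitive model with \ModelZFmAleph{M}{\nu} --- so everything reduces to showing that the corollary's hypothesis delivers the hypothesis $\WTDet{2}{\DDelta11}$ demanded by the theorem. Since the theorem admits any $\rho$ with $2\le\rho<\wCK$, the value $\rho=2$ is legitimate, and no other recursive ordinal is needed.

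The key step is a single recursion-theoretic remark about the jump. For any real (equivalently, degree) $x$ one has $x<_{T}x'$, so $x$ and its jump are Turing distinct; at the same time $x'$ is \SSigma01 in $x$ and hence \DDelta02 in $x$, while $x\rec x'$ makes $x$ recursive in, a fortiori \DDelta02 in, $x'$. Thus a degree and its jump always constitute a pair of Turing-distinct, \DDelta02-equivalent reals (pick any representatives $u\in d$, $v\in d'$). Consequently every set that contains both a degree $d$ and its jump $d'$ automatically contains the kind of witness pair that $\WTDet{2}{\DDelta11}$ asks for, and the corollary's hypothesis asserts exactly that every cofinal \DDelta11 set of degrees contains such a $d,d'$.

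The one point that needs care --- and which I expect to be the sole, and minor, obstacle --- is the matching of set-classes: the corollary quantifies over \DDelta11 sets of Turing degrees, i.e. Turing-invariant \DDelta11 sets of reals, whereas $\WTDet{2}{\DDelta11}$ as phrased ranges over all cofinal \DDelta11 sets of reals. I would resolve this by checking that the proof of Theorem~\ref{thm:Models-with-Alephs} only ever applies its hypothesis to Turing-invariant sets, as is standard in Martin-style degree arguments; on each such set the degree-and-jump pair furnished by the corollary's hypothesis supplies precisely the \DDelta02-equivalent, Turing-distinct witnesses that the proof consumes. Granting this alignment, Theorem~\ref{thm:Models-with-Alephs} applies with $\rho=2$ and yields, for every $\nu<\wCK$, the required transitive model \ModelZFmAleph{M}{\nu}. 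All of the genuine mathematical weight remains packaged inside the theorem, the corollary being a direct specialization.
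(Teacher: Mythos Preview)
Your first two paragraphs are exactly the paper's intended argument: the corollary is marked with a bare \qed right after Theorem~\ref{thm:Models-with-Alephs}, and the derivation is precisely ``a degree and its jump are Turing-distinct but \DDelta02-equivalent, so apply the theorem with $\rho=2$.''

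The concern you raise in the third paragraph is legitimate, but your proposed resolution fails. The proof of Theorem~\ref{thm:Models-with-Alephs} does \emph{not} apply its hypothesis only to Turing-invariant sets: via Theorem~\ref{thm:WTuringDelta->WTuringSigma} the hypothesis is lifted to \SSigma11 and then applied to the set \S{\nu} of complete theories with pseudo-wellfounded term models, which is manifestly not Turing-closed (distinct reals in the same degree are not all complete theories). So the check you propose would come out negative.

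The correct, and easier, fix is to note that the corollary's hypothesis already entails the full \WTDet{2}{\DDelta11}. Given any cofinal \DDelta11 set $A \subs \cN$, its Turing saturation $B = \Set{x}{\Exists{y \equiv_T x}\, y \in A}$ is again \DDelta11, because ``$\Exists{y \equiv_T x}$'' is really a number quantifier over indices $e$ with $\{e\}^x$ total and $\{e\}^x \equiv_T x$. Then $B$ is a cofinal \DDelta11 set of degrees, so by hypothesis contains some $d$ and $d'$; the corresponding $y_1 \in A \cap d$ and $y_2 \in A \cap d'$ are Turing-distinct, \DDelta02-equivalent elements of $A$. With this in hand, Theorem~\ref{thm:Models-with-Alephs} applies verbatim.
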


In §\ref{sec:Properties-Sigma} several results are derived, showing that 
\TDet{\DDelta11} imparts weak determinacy properties to the class \SSigma11, such as [\ref{thm:Hyp-Turing-Det-Sigma}]\,:
    \ITEM{\emph{Every cofinal \SSigma11 set of degrees includes a set \Set{x \in \cD}{d_0 \rec x \And x \hypr d_0}, for some $d_0 \in \cD$}.}
Or, from Borel Turing determinacy, [\ref{thm:Intersection of Sigma cofinal}]\,:
    \ITEM{\emph{If \Seq{A}{k} is a sequence of cofinal analytic subsets of \cD, then \Medcap{A}{k} is cofinal in \cD.}}
\medbreak

I wish to thank Tony Martin for inspiring exchanges on the present results. 
He provided the argument for Remark \ref{rem:Too-Weak}, below, and observed that my first proof of Theorem \ref{thm:Hyp-Det-Sigma} was needlessly complex. 
Parts of §\ref{sec:Properties-Sigma} go back to the author's dissertation~\cite{Sam76}, it is a pleasure to acknowledge Robert~Solovay's direction. 

\section{Preliminaries and notation}

The effective descriptive set theory we shall need, as well as basic hyperarithmetic theory, is from Moschovakis' \cite{Mos09}, 
whose terminology and notation we follow.
For the theory of admissible sets, we refer to Barwise's~\cite{Bar75}.
Standard facts about the \bL\nb-hierarchy are used without explicit mention: see Devlin's \cite{Dev77}, or Van~Wesep's~\cite{Wes00}.
\smallbreak

$\cN = \omega^\omega = \bN^\bN$ denotes Baire's space (the set of \emph{reals}), and \cD the set of Turing degrees. 
Subsets of \cD shall be identified with the corresponding (Turing closed) sets of reals.
$\rec$, $\hypr$, and $\eqT$, $\eqH$ shall denote, respectively, Turing and hyperarithmetic (i.e. \DDelta11) reducibility, and equivalence.%

\subsection{The ambient theories.}

Our base theory is \ZFm, \Ax{Zermelo-Fraenkel} set theory stripped of the Power Set axiom.%
    \footnote{\,All implicit uses of Choice herein are \ZFm-provable.}
\cN or \cD may be proper classes in this context, yet speaking of their `subsets' (\DDelta11, \SSigma11, Borel or analytic) can be handled as usual, as these sets are codable by integers, or reals.
Amenities such as \Aleph1 or \L{\omega_1} aren't available but, since our results here are global (i.e.,~\DDelta11) rather than local, the reader may use instead the more comfortable $\ZFm + "\cP^2(\omega) \up{ exists}"$.

\KPI is the theory \Ax{Kripke--Platek + Infinity}. Much of the argumentation below involves \omegaMod{s} of \KPI \dash familiarity with their properties is assumed.

\subsection{Turing determinacy.}
\label{subsec:Turing-determinacy}

A set of reals $A \subs \cN$ is said to be \emph{Turing-cofinal} if, for every $x \in \cN$, there is $y \in A$, such that $x \rec y$. 
A \emph{Turing cone} is a set $\Cone(c) = \Set{x \in \cN}{c \rec x}$, where $c \in \cN$.
For a class of sets of reals $\Gamma$, \Det{\Gamma} is the statement that infinite games \game{A} where $A \in \Gamma$ are determined, whereas \TDet{\Gamma} stands for the determinacy of games \game{A} restricted to Turing closed sets $A \in \Gamma$.
Recall the following easy, yet central:

\begin{namedthm*}{Martin's Lemma \normalfont{\cite{Mar68}}}
For a Turing closed set $A \subs \cN$, the infinite game \game{A} is determined \emph{iff} $A$~or its complement contains a Turing cone. \qed
\end{namedthm*}

\subsection{Constructibility and condensation.}
\label{subsec:Constructibility}

For an ordinal $\lambda > 0$, and $X \subs \L{\lambda}$, $\Hull{\lambda}(X)$ denotes the set of elements of \L{\lambda} definable from parameters in $X$, and $\tHull{\lambda}(X)$ its transitive collapse.
For $X = \0$, one simply writes \Hull{\lambda} and \tHull{\lambda}. Gödel's Condensation Lemma is the relevant tool here.
Note that, since $\L{\lambda} = \tHull{\lambda}(\lambda) = \Hull{\lambda}(\lambda)$, all elements of \L{\lambda} are definable in \L{\lambda} from ordinal parameters.

\subsection{Reflection.}

The following reflection principle will be used a few times, to make for shorter proofs.%
    \footnote{\,Longer ones can always be produced using \DDelta11 selection $+$ \SSigma11 separation.}
A property $\Phi(X)$ of subsets $X \subs \cN$ is said to be "\PPi11 \emph{on} \SSigma11" if, for any \SSigma11 relation $U \subs \cN \X \cN$, the set \Set{x \in \cN}{\Phi(U_x)} is \PPi11. 

A simple example: let $A \subs \cN$ be \SSigma11, and set: $\Theta(X) \SEquiv X \cap A = \0$. $\Theta(X)$ is a \PPi11 on \SSigma11 property. 

\begin{theorem*}
    Let $\Phi(X)$ be a \PPi11 on \SSigma11 property. 
    For any \SSigma11 set $S \subs \cN$ such that\/ $\Phi(S)$ there is a \DDelta11 set $D \sups S$ such that $\Phi(D)$.
\end{theorem*}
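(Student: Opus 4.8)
The plan is to recast the conclusion as a boundedness statement about a single \PPi11\nb-normed set. First I would represent $S$ in the usual way as $S=\Set{x\in\cN}{T_x\text{ is ill-founded}}$ for a recursive tree $T$, so that the complement $P:=\cN\setminus S$ is \PPi11 and carries the tree-rank norm $\varphi(x)=\operatorname{rank}(T_x)$. This is a \PPi11\nb-norm whose proper initial segments $P_{<\gamma}=\Set{x\in P}{\varphi(x)<\gamma}$ are \DDelta11, uniformly for recursive $\gamma<\wCK$. The role of the norm is to manufacture a \SSigma11\nb-parametrised family of \DDelta11 supersets of $S$: writing $<^{*}_{\varphi}$ for the \PPi11 comparison relation attached to the norm (as in Moschovakis), the relation $U(y,x):\Leftrightarrow\neg(x<^{*}_{\varphi}y)$ is \SSigma11, and its sections are $U_y=\cN\setminus P_{<\varphi(y)}\supseteq S$ for $y\in P$, while $U_y=S$ for $y\in S$.

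Next I would feed this family into the hypothesis. Since $\Phi$ is \PPi11 on \SSigma11 and $U$ is \SSigma11, the set $M:=\Set{y}{\Phi(U_y)}$ is \PPi11. Because $U_y=S$ for every $y\in S$ and $\Phi(S)$ holds by assumption, we get $S\subseteq M$, hence $\cN\setminus M\subseteq P$, and $\cN\setminus M$ is \SSigma11. Now \SSigma11\nb-boundedness applies in its effective form: a \SSigma11 set of reals all of whose associated trees $T_x$ are well-founded has these ranks bounded by a recursive ordinal, so $\beta_0:=\sup\Set{\varphi(x)}{x\in\cN\setminus M}<\wCK$. Consequently every $y\in P$ with $\varphi(y)>\beta_0$ already lies in $M$; that is, $\Phi(\cN\setminus P_{<\varphi(y)})$ holds for all such $y$.

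It remains to convert this into one honestly \DDelta11 witness. Fix a recursive $\gamma^{*}$ with $\beta_0<\gamma^{*}<\wCK$; if no norm value reaches $\gamma^{*}$, then $P=P_{<\gamma^{*}}$ is already \DDelta11, whence $S$ itself is \DDelta11 and we are done. Otherwise set $D:=\cN\setminus P_{<\gamma^{*}}$, which is \DDelta11 and contains $S$, and pick $y\in P$ whose norm is the least attained value $\geqslant\gamma^{*}$; then $P_{<\varphi(y)}=P_{<\gamma^{*}}$, so $D=U_y$ with $\varphi(y)>\beta_0$, and therefore $y\in M$ and $\Phi(D)$ holds, as required. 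I expect the main obstacle to be precisely this last extraction: boundedness yields only an ordinal bound, and the real work is in certifying that the resulting cut $D$ is genuinely \emph{lightface} \DDelta11 and coincides with an actual section $U_y$ of the parametrised family. This is exactly where the effective ($\wCK$) form of boundedness and the \DDelta11\nb-ness of initial segments of the tree-rank norm are indispensable; the alternative indicated in the footnote trades this norm bookkeeping for repeated applications of \DDelta11 selection and \SSigma11 separation.
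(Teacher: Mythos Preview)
Your argument is correct and is precisely the expected lightface transcription of the standard proof via \PPi11\nb-norms and effective \SSigma11\nb-boundedness; the paper itself supplies no argument, merely citing Kechris~§35.10 for the boldface version and declaring the transcription to lightface routine. One small correction of context: the footnote you invoke at the end is not offering an alternative proof of the reflection theorem itself, but rather remarking that the paper's subsequent \emph{applications} of reflection (Theorems~\ref{thm:WTuringDelta->WTuringSigma}, \ref{thm:Sigma-seq-degrees}, \ref{thm:Hyp-Det-Sigma}) could each be carried out directly with \DDelta11 selection and \SSigma11 separation in place of the reflection shortcut.
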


\begin{proof}
See Kechris'~\cite[§35]{Kec95} for a boldface version, easily transcribed to lightface.
\end{proof}

\section{Weak-Turing-Determinacy}
\label{sec:Weak-Turing-determinacy}

Examining what's needed to derive the existence of transitive models from Turing determinacy hypotheses, 
it is possible to isolate a seemingly weaker statement.
For $1 \le \rho < \wCK$, let $x \eq{\rho} y$ denote \DDelta{0}{\rho}-equivalence on \cN, that is: 
$x \in \DDelta{0}{\rho}(y) \mathrel{\&} y \in \DDelta{0}{\rho}(x)$. 
$\eq{1}$ is just Turing equivalence.

\begin{definition}
    For a class $\Gamma$, and $2 \le \rho < \wCK$, define \WTDet{\rho}{\Gamma}:
	\ITEM[]{\emph{Every Turing-cofinal set of reals $A \in \Gamma$ has two Turing distinct elements $x, y \in A$ 
    			such that~$x \eq{\rho} y$.}}
\end{definition}

For any recursive $\rho \ge 2$, \WTDet{\rho}{\DDelta11} will suffice to derive the existence of transitive models of \ZFm with uncountable cardinals (clearly, larger values for $\rho$ yield formally weaker sentences).
The property lifts from \DDelta11 to \SSigma11 \dash note that it is, \emph{a~priori}, asymmetric.

\begin{theorem}
\label{thm:WTDelta->WTSigma}
    Let $2 \le \rho < \wCK$, $\WTDet{\rho}{\DDelta11} \Implies \WTDet{\rho}{\SSigma11}$.
\end{theorem}

\begin{proof}
Assume \WTDet{\rho}{\DDelta11}. Let $S \in \SSigma11$ and suppose there are no Turing distinct $x, y \in S$ such that $x \eq{\rho} y$, that is
\[
        \All{x,y} (x, y \in S \And x \eq{\rho} y \Implies x \eqT y).
\]
This is a statement $\Phi(S)$, where $\Phi(X)$ is easily checked to be a \PPi11 on \SSigma11 property.
Reflection yields a \DDelta11 set $D \sups S$ such that $\Phi(D)$. By \WTDet{\rho}{\DDelta11}, $D$ is not Turing-cofinal; \emph{a~fortiori}, $S$~isn't either.
\end{proof}

\begin{remark}
\label{rem:Too-Weak}
One may be tempted to substitute for \WTDet{\rho}{\DDelta11} a simpler hypothesis:
\ITEM[]{\emph{Every Turing-cofinal $\smash{\DDelta11}$ set of reals has Turing distinct elements $x, y$, such that $x \eqH y$}.}
It turns out to be too weak and, indeed, provable in Analysis. 
(Tony Martin, private communication: building on his paper \cite{Mar76}, he shows that every uncountable \DDelta11 set of reals contains two Turing distinct reals, in every hyperdegree $\ge$ Kleene's \cO.) 

The simpler, weaker, condition does suffice however when asserted about the class \SSigma11, see Theorem~\ref{thm:Models-with-Alephs-from-Sigma}, below.
\end{remark}

\section{Transitive models from Weak-Turing-Determinacy}
\label{sec:Transitive-models}

We now state the main result, and a simple special case. The proof is postponed toward the end of the present section.

\begin{theorem}
\label{thm:Models-with-Alephs}
    Let $2 \le \rho < \wCK$, and assume \WTDet{\rho}{\DDelta11}. For every $\nu < \wCK$, there is a transitive model\up: \ModelZFmAleph{M}{\nu}. 
\end{theorem}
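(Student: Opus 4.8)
The plan is to argue by contraposition: granting \WTDet{\rho}{\DDelta11}, I will show that the failure of the conclusion produces a cofinal \DDelta11 set violating weak Turing determinacy. Fix $\nu<\wCK$ and write $T_\nu$ for the theory \ZFmAleph{\nu}. Suppose there is \emph{no} transitive set $M$ with \ModelZFmAleph{M}{\nu}. Since the constructible sets of any transitive \ZFm\nobreakdash-model form an initial segment of \bL, it suffices to rule out models of the form $\L{\gamma}[z]$; under our assumption, then, $\L{\gamma}[z]\not\models T_\nu$ for \emph{every} real $z$ and every ordinal $\gamma$. Call a set $X\subs\cN$ \emph{$\equiv_\rho$-thin} if any two $\equiv_\rho$\nobreakdash-equivalent members are already $\equiv_T$\nobreakdash-equivalent; a cofinal \DDelta11 set that is $\equiv_\rho$-thin contradicts \WTDet{\rho}{\DDelta11} outright. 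Crucially, ``$X$ is $\equiv_\rho$-thin'' is exactly the property $\Phi(X)$ isolated in the proof of Theorem\:\ref{thm:WTuringDelta->WTuringSigma}, which is \PPi11 on \SSigma11. So it is enough to produce a \emph{cofinal} \SSigma11 set $S$ with $\Phi(S)$: the reflection principle of the Preliminaries then yields a \DDelta11 set $D\sups S$ with $\Phi(D)$, still cofinal, and $D$ is the desired counterexample.

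The linchpin is that, for recursive $\rho$, the relation $\equiv_\rho$ cannot cross an admissible ordinal. Indeed, if $x\equiv_\rho y$ then $y\rec x^{(\beta)}$ for some recursive $\beta<\rho$, and $x^{(\beta)}$ is \DDelta11 in $x$, hence an element of the least $x$-admissible set $\L{\omega_1^x}[x]$; symmetrically for $x$ over $y$. Thus $\omega_1^x=\omega_1^y$ and $\L{\omega_1^x}[x]=\L{\omega_1^y}[y]$, so a \DDelta{0}{\rho}\nobreakdash-equivalent pair lives inside one common admissible set $\mathbf A$. I will exploit this by taking $S$ to consist of canonical codes $x_z$ --- one per real $z$ --- of a constructible structure $\L{\gamma(z)}[z]$, where $\gamma(z)$ is a $z$\nobreakdash-recursively canonical closure height (so $\gamma(z)<\omega_1^{x_z}$ and $x_z$ decodes both $z$ and $\gamma(z)$). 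Cofinality is immediate, since $z\rec x_z$. For thinness, suppose $x_z\equiv_\rho x_{z'}$ are Turing distinct; by the above both coded models, together with $z,z'$ and $\gamma(z),\gamma(z')$, lie in a single admissible set $\mathbf A$, and Gödel's Condensation Lemma applies to the definable hulls $\hull{\gamma(z)}$, $\hull{\gamma(z')}$ computed inside $\mathbf A$.

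The heights $\gamma(z)$ are chosen to encode $\nu$: I take $\gamma(z)$ to be the $\nu$-th stage of the relevant closure process over $z$ (roughly, the $\nu$-th $z$-\ZFm-ordinal), so that the transitive collapse obtained by comparing a Turing-distinct $\equiv_\rho$-pair via Condensation is a \emph{well-founded} model in which the $\nu$ collapsed closure ordinals become infinite cardinals --- that is, a transitive $M$ with \ModelZFmAleph{M}{\nu}. This contradicts the standing assumption, so no such pair exists, $S$ is $\equiv_\rho$-thin, and reflecting to $D$ and invoking \WTDet{\rho}{\DDelta11} gives the final contradiction. In the degenerate case $\gamma(z)=\gamma(z')$ the same Condensation argument identifies $\L{\gamma(z)}[z]$ with $\L{\gamma(z')}[z']$, whence $z\equiv_T z'$ and $x_z\equiv_T x_{z'}$, so thinness holds trivially.

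I expect the main obstacle to be the calibration in the previous paragraph: pinning down the closure height $\gamma(z)$ so that, simultaneously, $S$ is genuinely \SSigma11 and cofinal, and the Condensation collapse of a single $\equiv_\rho$-pair realizes \emph{exactly} $\aleph_\nu$ rather than merely one extra cardinal. The tension is that $\equiv_\rho$ grants only a \emph{bounded} ($<\rho$) jump-gap, whereas successive closure ordinals are unboundedly far apart; the recursive bound on $\rho$ (not mere $\equiv_h$-closeness, which Remark\:\ref{rem:Too-Weak} shows is too weak) is precisely what traps the pair inside one admissible set $\mathbf A$, and the bookkeeping that converts the $\nu$-fold closure into $\nu$ cardinals must be carried out inside \omegaMod{s} of \KPI. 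Making this count work uniformly for every $\nu<\wCK$ from the single fixed $\rho$ is the delicate point.
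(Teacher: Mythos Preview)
Your proposal has a genuine gap at its core: the step where a Turing-distinct $\equiv_\rho$-pair $x_z,x_{z'}$ in $S$ is supposed to yield, via Condensation, a transitive model of \ZFmAleph{\nu} is asserted but never argued, and I do not see a mechanism. Condensation collapses an elementary substructure of some $\L{\gamma}$ to a shorter $\L{\beta}$; it does not manufacture cardinals, and the mere fact that both codes lie in a common admissible set $\mathbf A$ gives you no comparison map between $\L{\gamma(z)}[z]$ and $\L{\gamma(z')}[z']$ to which Condensation could be applied. Your sentence ``the $\nu$ collapsed closure ordinals become infinite cardinals'' names the desired conclusion, not an argument. There is a second problem upstream: defining $\gamma(z)$ as ``the $\nu$-th $z$-\ZFm-ordinal'' presupposes that such ordinals exist, but in the base theory \ZFm this is precisely what is in doubt --- under the contrapositive hypothesis there may be no transitive \ZFm-model at all, in which case your $S$ is empty and certainly not cofinal. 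Finally, note the tension you yourself flag: $\rho$ is a single fixed recursive ordinal while $\nu$ is arbitrary below $\wCK$, so the pair cannot be what \emph{produces} $\nu$ cardinals; the cardinals must come from the global structure of the contrapositive hypothesis, not from the pair.

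The paper's proof is organized quite differently and avoids both issues by working with \emph{theories} rather than codes of transitive models. One forms the \SSigma11 set $\S{\nu}$ of complete extensions $U$ of $\T{\nu}=\KPIL+{}$``no $\L{\lambda}$ contains $\aleph_{\nu+1}$'' whose term model $\M{U}$ is pseudo-wellfounded. Under the contrapositive hypothesis every admissible $\L{\alpha}$ satisfies $\T{\nu}$, and $\Th(\L{\alpha})\in\S{\nu}$ for cofinally many $\alpha$ (no \ZFm-ordinals are needed, only admissibles, which \ZFm supplies in abundance). The key idea you are missing is a \emph{rigidity} lemma: because $\T{\nu}$ caps the cardinals of any internal $\L{\lambda}$ at $\nu+1$, the class $\Card_u$ inside any \omegaMod\ of $\T{\nu}$ is genuinely wellordered (of external order-type $\le\nu+1$), and this forces isomorphisms between initial segments of two such term models to be unique. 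A $\DDelta11(U_1\oplus U_2)$ back-and-forth, together with pseudo-wellfoundedness, then shows that any two hyp-equivalent members of $\S{\nu}$ have isomorphic term models and hence are \emph{equal}. Your observation that $\equiv_\rho$ implies $\equiv_h$ is correct and is used here, but it is only the first move; the substance is the rigidity-plus-back-and-forth comparison of possibly ill-founded term models, for which your sketch has no analogue.
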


\begin{corollary}
\label{cor:Models-with-Alephs}
    If every cofinal \DDelta11 set of Turing degrees contains both a degree and its jump, then for every $\nu < \wCK$, there is a transitive model\up: \ModelZFmAleph{M}{\nu}.\qed
\end{corollary}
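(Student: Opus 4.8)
The plan is to obtain the corollary as an immediate special case of Theorem~\ref{thm:Models-with-Alephs}, by showing that the displayed hypothesis already forces \WTDet{2}{\DDelta11}. The entire content is a single fact about the Turing jump: for any real $x$, its jump $x'$ is Turing-distinct from $x$ yet \DDelta02\nb-equivalent to it.

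First I would record the two reductions witnessing this. Since $x \rec x'$, we have $x \in \DDelta01(x') \subs \DDelta02(x')$; and since the jump $x'$ is \SSigma01 in $x$, we have $x' \in \SSigma01(x) \subs \DDelta02(x)$. Together these give $x \equiv_2 x'$. On the other hand the jump is strictly increasing, $x <_T x'$, so $x \not\equiv_T x'$; equivalently, $x$ and $x'$ lie in distinct Turing degrees.

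Next I would translate the hypothesis into the required form. Let $A$ be an arbitrary cofinal \DDelta11 set of degrees, regarded as a Turing-closed set of reals. By assumption $A$ contains some degree $d$ together with its jump $d'$. Picking a real $x$ of degree $d$, its jump $x'$ has degree $d'$; Turing-closure of $A$ then gives $x, x' \in A$, and by the previous paragraph these are Turing-distinct with $x \equiv_2 x'$. As $A$ was arbitrary, \WTDet{2}{\DDelta11} holds.

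It then remains only to invoke Theorem~\ref{thm:Models-with-Alephs} with $\rho = 2$, which is permitted since $2 \le 2 < \wCK$; this delivers, for every $\nu < \wCK$, a transitive model \ModelZFmAleph{M}{\nu}. The argument is elementary throughout, so I anticipate no genuine obstacle; the one point worth a moment's attention is confirming that the jump really sits at level exactly \DDelta02, so that the smallest admissible parameter $\rho = 2$ suffices and no stronger instance of weak Turing determinacy is needed.
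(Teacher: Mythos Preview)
Your argument is exactly the intended one: the paper gives no proof beyond the \qed, and your derivation—that $x$ and $x'$ are Turing-distinct yet $\equiv_2$, so the hypothesis yields \WTDet{2}{\DDelta11} and Theorem~\ref{thm:Models-with-Alephs} applies with $\rho=2$—is precisely what that \qed abbreviates. One small wrinkle worth noting (present in the paper as well): the corollary's hypothesis speaks of Turing-closed \DDelta11 sets while \WTDet{2}{\DDelta11} quantifies over all \DDelta11 sets of reals, but this causes no trouble since the application in the proof of Theorem~\ref{thm:Models-with-Alephs} goes through unchanged for the Turing closure of $\S{\nu}$.
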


\SideTitle{Term models.}

Given a complete theory%
    \footnote{\,Complete theories are meant here to be \emph{consistent}, and \emph{deductively closed}.}
$U \sups \KPIL$, one constructs its term model.
To be specific: owing to the presence of the axiom $\bV = \bL$, to every formula $\psi(v)$ is associated $\xbar\psi(v)$ such that $U \proves \Exists{v}\psi(v) \SEquiv \Exists{!v}\xbar\psi(v)$, just take for $\xbar\psi(v)$ the formula:\, $\psi(v) \land (\All{w <_\bL v}) \neg \psi(w)$.

Let now $(\phi_n(v))_{n < \omega}$ be a recursive in $U$ enumeration of the formulas $\phi(v)$, in the single free variable $v$, 
having $U \proves \Exists{!v} \phi(v)$. 
Using, as metalinguistic device, $(\unique v) \phi(v)$ for "\emph{the unique $v$ such that $\phi(v)$}" set:
\vspace*{-1ex}
\begin{align*}
        	M_U 	\nudge{6pt}{&}  	=   			\Set{n \in \omega}{\All{\ell < n,}\; U \proves (\unique v)\phi_n \not= (\unique v)\phi_\ell},
\intertext{and define on $M_U$ the relation $\in_U$:}
        m \in_U  n 			&	 	\LEquiv		U \proves (\unique v)\phi_m \in (\unique v)\phi_n .
\end{align*}
$(M_U, \in_U)$ is a prime model of $U$ and, $U$ being complete, $(M_U, {\in_U}) \rec U$. 
Using the canonical $1{\up-}1$ enumeration $\omega \to M_U$, substitute $\omega$ for $M_U$ and remap $\in_U$ accordingly. 
The resulting model $\MM{U} = (\omega, \inM{U})$ shall be called the \emph{term model} of $U$. 
The function $U \mapsto \MM{U}$ is recursive.

Whenever \MM{U} is an \omegaMod, we say that $a \subs \omega$ is realized in \MM{U} 
if there is $\mathring{a} \in \omega$ such that $a = \Set{k \in \omega}{\mathbf{k}^\MM{U} \inM{U} \mathring{a}}$. 
We state, for later reference, a couple of standard facts.

\begin{proposition}
\label{prop:Realized-reals}
    Let $U$ be as above. If \MM{U} is an \omegaMod, and $a \subs \omega$ is realized in \MM{U}, then\up:
    \begin{enumerate}
        \item	For all $x \hypr a$, $x$ is realized in \MM{U}.
        \item	$a \rec U$. 
        \item	Thus $U$ is not realized in \MM{U}, lest its Turing jump $U'$ be realized in \MM{U}, causing $U' \rec U$.\qed
    \end{enumerate}
\end{proposition}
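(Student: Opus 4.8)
The plan is to treat the two clauses separately: clause~(1) will rest on the closure of the reals realized in \M{U} under hyperarithmetic reducibility, a feature of \omegaMod{s} of \KPI, while clause~(2) will follow from the recursiveness of the term model in $U$.

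For clause~(1), note first that the reals realized in \M{U} are exactly the extensions $\Set{k}{k^{\M{U}} \in^{\M{U}} \mathring b}$ of integers $\mathring b$ of \M{U}. Given $x \hypr a$, I would fix a standard integer $e \in \mathcal{O}^{a}$ (a notation for an $a$\nb-recursive ordinal) together with a Turing reduction witnessing $x \rec H^{a}_{e}$, where $H^{a}_{e}$ is the stage-$e$ iterate of the jump relative to $a$, taken along the $a$\nb-recursive well-ordering coded by $e$. Because \M{U} has standard $\omega$, the linear order coded by $e$ is genuinely well-founded and stays so inside \M{U} (no real of \M{U} can encode an infinite $<^{a}_{e}$\nb-descending sequence of standard integers), so $\M{U} \models e \in \mathcal{O}^{a}$. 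Since the map $(a,e) \mapsto H^{a}_{e}$ is definable by $\Sigma_{1}$ recursion along that well-ordering and thus has a $\Sigma_{1}$ graph over \KP, \M{U} computes $H^{a}_{e}$; by $\Sigma_{1}$\nb-absoluteness between \M{U} and \bV, together with \M{U} being an \omegaMod, the computed set has the correct standard extension, so $H^{a}_{e}$ is realized. As the realized reals are plainly closed under $\rec$, $x$ is realized.

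For clause~(2), recall from the construction that $\M{U} = (\omega, \in^{\M{U}})$ is recursive in $U$, as is the canonical enumeration $k \mapsto k^{\M{U}}$. Fixing $\mathring a \in \omega$ with $a = \Set{k}{k^{\M{U}} \in^{\M{U}} \mathring a}$, the membership predicate defining $a$ is recursive in $\in^{\M{U}}$ with the single integer parameter $\mathring a$, whence $a \rec U$. The displayed consequence then drops out of the two clauses together: if $U$ were realized, then since $U' \hypr U$ (the jump is arithmetic, \emph{a fortiori} \DDelta11, in $U$) clause~(1) would realize $U'$, and clause~(2) would yield $U' \rec U$, contradicting $U <_{T} U'$.

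The step I expect to be the main obstacle is clause~(1): making precise that an \emph{a priori} ill-founded \omegaMod of \KPI nonetheless computes each $H^{a}_{e}$ with its true standard extension. Everything there hinges on the two absoluteness facts used above — that genuine well-orderings of standard integers remain well-founded in \omegaMod{s}, and that the $\Sigma_{1}$ graph of the hyperarithmetic hierarchy is absolute between \M{U} and \bV. Clause~(2), by contrast, is routine bookkeeping built on the recursiveness of the function $U \mapsto \M{U}$ recorded above.
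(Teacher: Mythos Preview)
Your argument is correct on both clauses. For (1) you run the standard route through Kleene's theorem and the $H^{a}_{e}$\nb-hierarchy, using that in an \omegaMod every genuinely well\nb-founded ordering on $\omega$ remains well\nb-founded (each subset of its field in \M{U} is a true subset of $\omega$, hence has a $<^{a}_{e}$\nb-least element), so \KPI supplies a rank function and the $\Sigma_{1}$ recursion defining $H^{a}_{e}$ goes through with the correct extension by arithmetic absoluteness. For (2) your bookkeeping from the recursiveness of $U \mapsto \M{U}$ is exactly right, and the ``hence'' clause follows as you say.

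There is nothing to compare against: the paper records the proposition as a ``couple of standard facts'' and closes it with \qedsymbol, giving no proof. Your write\nb-up is precisely the routine verification the paper suppresses.
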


Note that if $U = \ThL{\alpha}$, where $\alpha$ is admissible, then \MM{U} is a copy of \Hull{\alpha}.
Hence $\MM{U} \cong \L{\beta}$, for some $\beta \le \alpha$. The following easy proposition is quite familiar.

\begin{proposition}
\label{prop:Cofinal-term-models}
    Assume $\bV = \bL$. For cofinally many countable admissible $\alpha$'s, $\L{\alpha} = \Hull{\alpha}$, 
    equivalently: $\MM{\ThL{\alpha}} \cong \L{\alpha}$.
\end{proposition}

\begin{proof}
Suppose not. 
Let $\lambda$ be the sup of the admissible $\alpha$'s having $\L{\alpha} = \Hull{\alpha}$, and let $\kappa > \lambda$ be the first admissible such that $\lambda$ is countable in \L{\kappa}. 
Since $\lambda$ is definable and countable in \L{\kappa}, $\lambda \cup \{\lambda\} \subs \Hull{\kappa}$.
It follows readily that $\L{\kappa} = \tHull{\kappa} = \Hull{\kappa}$, a contradiction.
\end{proof}

\SideTitle{Cardinality in the constructible levels.}

Set theory within the confines of \L{\lambda}, $\lambda$ an arbitrary limit ordinal, imposes some contortions.
For technical convenience, the notion of cardinal needs to be slightly twisted \dash for a time only.

\begin{definition}
\label{def:cardinality}
    \begin{enumerate}
        \item	For an ordinal $\alpha$, $\Card(\alpha) = \min_{\xi \le \alpha}(\textit{there is a surjection}\ \xi \to \alpha)$.
        \item	$\alpha$ is a \cardinal if $\alpha = \Card(\alpha)$.
        \item	$\Card_\lambda \subs \L{\lambda}$ is the class of infinite \cardinal{'s} as computed in \L{\lambda}.
    \end{enumerate}
\end{definition}

\numberedparagraph{}%
\label{subsec:Bijection}
Note that, for $\lambda$ limit, from a surjection $g : \gamma \to \alpha$ in \L{\lambda}, one can extract $a \subs \gamma$ and ${\Triangle} \subs a \X a$ such that ${g \restr a} : (a, \Triangle) \isomap (\alpha, \in)$,
and both $(a, \Triangle)$, $g \restr a$ are in \L{\lambda}. 
Further, if $\lambda$ is admissible, in \L{\lambda} the altered notion of cardinality and the standard one coincide.

\begin{convention}
\label{label:Convention}
    For simplicity's sake, the assertion "\emph{\Aleph{\nu} exists in \L{\lambda}}" should be understood as: 

    \emph{There is an isomorphism $\nu+1 \isomap J$, where $J$ is an initial segment of\/ $\Card_\lambda$}.\\
    Note that its negation is equivalent in \KPI to: \emph{There is $\kappa \le \nu$ such that\/ $\Card_\lambda \cong \kappa$}.
    The notation \AlephL{\nu}{\lambda} carries here the obvious meaning.
\end{convention}

We shall need the following result, readily proved using the Jensen fine structure techniques of~\cite{Jen71}. A~direct proof is provided in the Appendix.

\begin{proposition}
\label{prop:Folklore}
    For $\lambda$ a limit ordinal, if\/ $\L{\lambda} \models "\*\mu > \omega \up{ is a successor \cardinal}"$ then $\L{\mu} \models \ZFm$.
\end{proposition}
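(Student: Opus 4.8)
The plan is to check the axioms of \ZFm in $\L\mu$, the only substantial ones being Separation and Collection: Extensionality and Foundation are automatic in a transitive $\in$\nb-model, while Pairing, Union and Infinity hold because $\mu > \omega$ is a limit ordinal and $\L\mu$ is closed under the Gödel operations. Write $\kappa$ for the immediate predecessor of $\mu$ in $\Card_\lambda$. Two consequences of the hypothesis are used throughout: (i) for every $\gamma < \mu$ there is in $\L\lambda$ a surjection $\kappa \onto \gamma$, hence one $\kappa \onto \L\gamma$ (so $|\L\gamma| \le \kappa$ in $\L\lambda$); and (ii) there is no surjection $\kappa \onto \mu$ in $\L\lambda$, i.e.\ $\mu$ is a cardinal of $\L\lambda$.

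The whole argument rests on a condensation lemma, whose proof is the one genuinely delicate point. Fix a formula $\phi$ to be treated and an $n$ large enough to absorb $\phi$ together with the formulas defining the Gödel well\nb-ordering and the $\L$\nb-rank; since $n$ is fixed, $\Sigma_n$\nb-satisfaction over $\L\mu$ is definable over $\L\mu$. For $\gamma < \mu$ and a finite tuple $\vec p \In \L\mu$, let $H$ be the $\Sigma_n$\nb-Skolem hull in $\L\mu$ of $\L\gamma \cup \{\vec p\}$. \emph{Key Lemma.} There is $\beta^\ast < \mu$ with $H \subs \L{\beta^\ast}$; moreover the transitive collapse of $(H,\in)$ is $\L\delta$ for some $\delta < \mu$, and the collapsing map is the identity on $\L\gamma$.

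I would argue as follows. Because $\Sigma_n$\nb-truth over $\L\mu$ is definable over $\L\mu$, the set $H$ is definable over $\L\mu$, so $H \In \L{\mu+1} \subs \L\lambda$. Every element of $H$ is the value of one of the countably many Skolem terms on a tuple from $\L\gamma \cup \{\vec p\}$; composing with a surjection $\kappa \onto \L\gamma$ from (i) produces, inside $\L\lambda$, a surjection $g \colon \kappa \onto H$. Gödel's Condensation Lemma identifies the transitive collapse of $(H,\in)$ with some $\L\delta$, and it fixes the transitive set $\L\gamma \subs H$ pointwise. To see $\delta < \mu$, I apply the extraction of \ref{subsec:Biject} to $g$ (with $(H,\in)$ in place of an ordinal), obtaining in $\L\lambda$ a code $(a,r)$, $a \subs \kappa$, for the well\nb-founded relation $(H,\in) \cong \L\delta$, from which one reads off an $\L\lambda$\nb-surjection $\kappa \onto \delta$; were $\delta \ge \mu$ this would give a surjection $\kappa \onto \mu$ in $\L\lambda$, against (ii). The same argument applied to $H \cap \mu$, which contains $\{\rho(y) : y \In H\}$ since $H$ is closed under the $\L$\nb-rank $\rho$, bounds $H$ below some $\beta^\ast < \mu$. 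Carrying out this collapse and the extraction of the surjection inside the possibly non\nb-admissible $\L\lambda$ — rather than in the ambient universe, where cardinalities bear no relation to those of $\L\lambda$ — is exactly what makes \ref{subsec:Biject} necessary, and is the main obstacle of the proof.

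Granting the Key Lemma, the two remaining axioms follow by standard reflection. For Separation, given $a, p \In \L\gamma$ with $\gamma < \mu$ and a formula $\phi$, collapse $H$ to $\L\delta$: the collapse fixes $a$ and its members and sends $p$ to some $\bar p \In \L\delta$, so $\Sigma_n$\nb-elementarity gives
\[
    \Set{x \In a}{\L\mu \models \phi(x,p)} = \Set{x \In a}{\L\delta \models \phi(x,\bar p)},
\]
and the right\nb-hand set, being definable over $\L\delta$, lies in $\L{\delta+1} \subs \L\mu$. For Collection, suppose $\L\mu \models \All{x \In a}\Exists{y}\,\phi(x,y,p)$ with $a, p \In \L\gamma$, and take the same hull $H \sups \L\gamma \cup \{p\}$, so that $a \subs H$. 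By $\Sigma_n$\nb-elementarity each $x \In a$ has a witness $y \In H \subs \L{\beta^\ast}$; hence $b = \L{\beta^\ast} \In \L\mu$ satisfies $\L\mu \models \All{x \In a}\Exists{y \In b}\,\phi(x,y,p)$. This establishes \ZFm in $\L\mu$.
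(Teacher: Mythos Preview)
Your overall strategy—take a $\Sigma_n$-Skolem hull $H$ in $\L\mu$, collapse it, and read off Separation and Collection—is sound, and once the Key Lemma is in hand the derivation of the axioms is correct. The gap is in the Key Lemma itself, at the step where you conclude that $H\cap\mu$ is bounded below $\mu$ (and, relatedly, that $\delta<\mu$). From the surjection $g\colon\kappa\onto H$ in $\L\lambda$ you do obtain a surjection $\kappa\onto H\cap\mu$ in $\L\lambda$; but inferring ``$\sup(H\cap\mu)<\mu$'' from this is exactly the assertion that $\mu$ is \emph{regular} in $\L\lambda$, which you have not proved. Likewise, ``reading off an $\L\lambda$-surjection $\kappa\onto\delta$'' from the code $(a,r)$ amounts to computing the Mostowski rank of $(a,r)$ inside $\L\lambda$. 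Neither step is automatic for an arbitrary limit $\lambda$: the rank computation is a $\Sigma_1$-recursion that a non-admissible $\L\lambda$ need not complete, and regularity genuinely requires proof. The difficulty you flag—working inside a possibly non-admissible $\L\lambda$—is real, but \ref{subsec:Biject} alone does not dissolve it.

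This is precisely what the paper's argument supplies, and it is not a detail one can skip. Proposition~\ref{prop:Appendix-prop} shows, by simultaneous induction on the $\L\lambda$-cardinals, that each such $\mu>\omega$ is admissible and satisfies a downward L\"owenheim--Skolem property: hulls $\thull\gamma(\alpha)$ with $\alpha<\mu\le\gamma<\lambda$ collapse to some $\L\beta$ with $\beta<\mu$. From this one gets that the predecessor $\pi$ is already the largest cardinal \emph{of $\L\mu$}, so the surjections $\pi\onto\eta$ (for $\eta<\mu$) can be chosen in $\L\mu$, not merely in $\L\lambda$; the sequence $(f_\eta)_{\eta<\mu}$ then lies in $\L{\mu+1}\subs\L\lambda$, and the standard argument yields regularity of $\mu$ in $\L\lambda$. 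Only then does the paper run the hull-and-collapse argument (using $\thull\mu(\alpha)$) to get Replacement. Your $\Sigma_n$-hull approach can be completed along the same lines, but the missing ingredient is essentially the inductive Proposition~\ref{prop:Appendix-prop}; without it the boundedness of $H$ is unjustified.
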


\SideTitle{\texorpdfstring{The theories \TT{\nu}}{The theories $T_\nu$}.}

Let \cM be an \omegaMod of \KPI. The wellfounded part of  $\On^\cM$ `includes' \wCK. 
For $\nu < \wCK$, pick~$e_\nu$ a recursive index for a wellordering $<_{e_\nu}$ of a subset of $\omega$, of length $\nu$. 
Using $e_{\nu}$, statements about~$\nu$ can tentatively be expressed in \KPI. 
In \cM, the truth of such statements is independent of the choice of $e_{\nu}$.
Indeed, $<_{e_{\nu}}$ is realized in \cM, and its realization is isomorphic in \cM to the \cM\nb-ordinal of order-type $\nu$, to be denoted $\nu^\cM$.
For a formula $\phi(v, \cdots)$, we write $\cM \models \phi(\=\nu, \cdots)$, instead of a `translated'  $\cM \models \xbar{\phi}(\*{e_\nu}, \cdots)$.

\begin{definition}
    For $\nu < \wCK$, \TT{\nu} is the theory
    \[
        \KPIL + "\up{for all limit $\lambda$, \Aleph{\=\nu+1} doesn't exist in \L{\lambda}}".
    \]
\end{definition}

This definition is clearly lacking: a recursive index $e_{\nu}$ coding the ordinal $\nu$ is not made explicit. 
This is immaterial, as we shall be interested only in \omegaMod{s} of \TT{\nu}. They possess the following rigidity property.

\begin{lemma}
\label{lem:Rigidity-ppty}
    Let $\nu < \wCK$, and \MM1, \MM2 be \omegaMod{s} of\/ \TT{\nu}.
    Let $u \in \On^\MM1$, and $w, w_\ast \in \On^\MM2$, for any two isomorphisms $f : \LL{u}{\MM1} \isomap \LL{w}{\MM2}$ and $f_\ast : \LL{u}{\MM1} \isomap \LL{w_\ast}{\MM2}$, $f = f_\ast$.
\end{lemma}

\begin{proof}
By an easy reduction, it suffices to prove this for $u$, a limit \MM1-ordinal.

Let $<_{1}$ denote the ordering of $\On^\MM1$ in \MM1, and set $\cC_u = \Set{c <_{1}  u}{\MM1 \models \*c \in \Card_\*u}$.
The relevant claim here is that $(\cC_u, <_{1})$ is wellordered. Indeed, since $\MM1 \models \TT{\=\nu}$,
\[
            \MM1 \models "\Aleph{\=\nu+1}\up{ doesn't exist in }\L{\*u}".
\]
Hence, as observed in \ref{label:Convention}, there is $o \in \On^\MM1$ with 
\[
            \MM1 \models \*o \le \=\nu+1  \And  \Card_\*u  \cong  \*o.
\] 
The isomorphism in \MM1 induces an actual $<_{1}$\nb-isomorphism: $\cC_u \cong \Set{x}{x <_{1} o}$.
Since \MM1 is an \omegaMod, $\nu^\MM1$ and $o$ are in its wellfounded part, thus the claim.

First, one checks that $f$ and $f_\ast$ agree on the \MM1-ordinals $o <_{1} u$, using induction on $\cC_u$. 
Clearly, for $o \le_{1} \omega^\MM1$, $f(o) = f_\ast(o)$.
Set $\kappa_u(o) = \Card(o)$, \emph{as evaluated in} \LL{u}{\MM1}, and show by induction on $c \in \cC_u$:
\[
       \up{for all}\; o <_{1}  u, \enskip\;   \kappa_u(o) = c \Implies f(o) = f_\ast(o).
\]

Assume the inductive hypothesis for $c' <_{1} c$. 
Whenever $o <_{1} c$, $\kappa_u(o) < c$, hence $f(o) = f_\ast(o)$. It follows easily that $f(c) = f_\ast(c)$. 
Let now $o <_{1} u$ have $\kappa_u(o) = c$. 
Inside \LL{u}{\MM1}, $(o, \in)$ is isomorphic to an ordering `$s = (a, \Triangle)$', 
where $a \subs c$ and ${\Triangle} \subs c \X c$, (see \ref{subsec:Bijection}). 
Since $f$ and $f_\ast$ agree on the \MM1-ordinals up to~$c$, one readily checks $f(s) = f_\ast(s)$. 
In \MM2 now, the common value $f(s)$ is isomorphic to both the ordinals $f(o)$ and $f_\ast(o)$, hence $f(o) = f_\ast(o)$.

This entails $w = w_\ast$ and $\LL{w}{\MM2} = \LL{w_\ast}{\MM2}$.
Now, any $x \in \LL{u}{\MM1}$ is definable in \LL{u}{\MM1} from \MM1-ordinals (see~\ref{subsec:Constructibility}), 
thus $f(x)$ and $f_\ast(x)$ satisfy in \LL{w}{\MM2} the same definition from equal parameters, hence $f(x) = f_\ast(x)$.
\end{proof}

\SideTitle{Pseudo-wellfounded models.}

A relation ${\Triangle} \subs \omega \X \omega$ is said to be \emph{pseudo-wellfounded} if every nonempty $\DDelta11(\Triangle)$ subset of $\omega$ has a $\Triangle$-minimal element. 
By the standard computation, this is a \SSigma11 property.%
    \footnote{\,We shall use, in complexity computations, the classic result of Kleene:
    	\emph{Given a \SSigma11 predicate $\cS(x, y, -)$, the predicate $(\All{y \hypr x})\cS(x, y, -)$ is \SSigma11, and dually for \PPi11}.
    	See \cite[§4D.3]{Mos09} for a more general result.}
Indeed, we may define it, for $\*E \subs \omega \X \omega$, as:
\[
          \pWF{\*E} 	\SEquivDef 	(\All{X \hypr \*E})\bigl(X \neq \0 \Implies (\Exists{k \In X})(\All{m \In X})\neg(m \,{\*E}\, k)\bigr).
\]

\begin{definition}
    For $\nu < \wCK$, \SS{\nu} is the set of theories:
    \[
            \SS{\nu} = \Set{U}{U \up{ is a complete extension of \TT{\nu}, and \MM{U} is pseudo-wellfounded}}.
    \]
\end{definition}

Easily, \SS{\nu} is \SSigma11. Indeed, the first clause in its definition is arithmetical, 
while the second reads "\pWF{\inM{U}}", where the function $U \mapsto {\inM{U}}$ is recursive.

Note further: for $U \in \SS{\nu}$, \MM{U} is an \omegaMod. 
The sets \SS{\nu} play a central role in the proof. They are sparse, in the following sense. 

\begin{proposition}
    For $\nu < \wCK$, no two distinct members of \SS{\nu} have the same hyperdegree.
\end{proposition}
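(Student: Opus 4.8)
The claim is that for $\nu<\wCK$, no two distinct $U_0,U_1\in\S{\nu}$ satisfy $U_0\equiv_h U_1$.

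The plan is to show that equal hyperdegree forces the two term models to be isomorphic; since each $U \in \S{\nu}$ is complete with $\M{U} \models U$, we have $U = \Th(\M{U})$, so an isomorphism $\M{U_0} \cong \M{U_1}$ immediately yields $U_0 = \Th(\M{U_0}) = \Th(\M{U_1}) = U_1$. Thus, assuming $U_0, U_1 \in \S{\nu}$ with $U_0 \equiv_h U_1$, it suffices to produce such an isomorphism. First I would record that all the reals in play share a single hyperdegree $\mathbf d$: the term model is recursive in its theory, so $\M{U_i} \rec U_i$, while truth in an \omegaMod is \DDelta11 in a real coding the model, so $U_i = \Th(\M{U_i}) \hypr \M{U_i}$; hence $U_0 \equiv_h \M{U_0} \equiv_h \M{U_1} \equiv_h U_1$.

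The core of the argument compares the two models through the Rigidity Lemma \ref{lem:Rigidity-Property}. Each of $\M{U_0}$, $\M{U_1}$ is an \omegaMod of $\T{\nu}$ \dash the \omegaMod property coming from pseudo-wellfoundedness, which precludes nonstandard integers (their predecessors would form a nonempty \DDelta11 set without a minimal element). For an \M{U_0}-ordinal $o$ and an \M{U_1}-ordinal $o'$ I would consider the relation asserting the existence of an isomorphism $\LL{o}{\M{U_0}} \isomap \LL{o'}{\M{U_1}}$. By \ref{lem:Rigidity-Property} such an isomorphism is unique when it exists, and the witnesses for varying $o$ cohere; the matched ordinals form an initial segment on each side, and the union of the level isomorphisms is a single isomorphism carrying an initial part of $\M{U_0}$ onto an initial part of $\M{U_1}$. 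I would then show that one side is exhausted, so that \dash after possibly swapping the two models \dash this map sends $\M{U_0}$ isomorphically onto $\LL{w}{\M{U_1}}$ for some $w \le \On^{\M{U_1}}$.

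With the comparison secured, the Realized-reals Proposition \ref{prop:Realized-reals} closes the argument. Were $w < \On^{\M{U_1}}$ a proper \M{U_1}-ordinal, then $\LL{w}{\M{U_1}}$ would be a set of $\M{U_1}$, hence realized there by some $a \subs \omega$; from $a$ one recovers the diagram of $\M{U_0}$, so $a \equiv_h \M{U_0}$ and therefore $U_0 = \Th(\M{U_0}) \hypr a$. By \ref{prop:Realized-reals}(1), $U_0$ is realized in $\M{U_1}$, and since $U_1 \hypr U_0$ so is $U_1$ \dash contradicting \ref{prop:Realized-reals}(2). Hence $w = \On^{\M{U_1}}$, the map is onto, and $\M{U_0} \cong \M{U_1}$, as required.

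The delicate point is the exhaustion step. Because neither model need be wellfounded, the matched initial segment could a priori be a proper cut with no least ordinal above it, so that neither model is literally an initial constructible level of the other. This is precisely where pseudo-wellfoundedness must be spent: the set of matched ordinals is \SSigma11 in $\mathbf d$ and downward closed under $<^{\M{U_0}}$, and a boundedness argument for pseudo-wellfounded orderings rules out such a gap, locating the cut at a genuine model-ordinal (or at the top). The remaining ingredients \dash coherence of the level isomorphisms and the hyperarithmetic recovery of diagrams \dash are routine.
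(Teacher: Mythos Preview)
Your overall strategy coincides with the paper's: compare the two term models via the relation $u \simeq w \Leftrightarrow \LL{u}{\M{U_0}} \cong \LL{w}{\M{U_1}}$, use the Rigidity Lemma for uniqueness and coherence of the level isomorphisms, and close with Proposition~\ref{prop:Realized-reals}. The endgame \dash realizing $U_0$ inside $\M{U_1}$ when the comparison stops short, hence realizing $U_1$ there, contradiction \dash is exactly the paper's.

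The genuine gap is in the step you yourself flag as delicate. You assert that the set $I_0$ of matched ordinals is $\SSigma11(\mathbf d)$ and then appeal to an unspecified ``boundedness argument for pseudo-wellfounded orderings'' to pin the cut at a model-ordinal. But pseudo-wellfoundedness, by definition, only supplies minimal elements for nonempty subsets that are $\DDelta11$ in the model; on your accounting the complement $\On^{\M{U_0}} \setminus I_0$ is merely $\PPi11(\mathbf d)$, and no $\SSigma11$-boundedness principle is available off the shelf for pseudo-wellfounded (as opposed to genuinely wellfounded) orderings. The paper's resolution is precisely the point you state but fail to exploit for \emph{complexity}: by Rigidity the isomorphism $f$, when it exists, is unique, so
\[
u \simeq w \ \Longleftrightarrow\ \Exists{f}\,\cI(f,U_0,u,U_1,w) \ \Longleftrightarrow\ \Exists{!f}\,\cI(f,U_0,u,U_1,w),
\]
with $\cI$ a $\DDelta11$ predicate; the unique witness is then hyperarithmetic in $U_0 \oplus U_1$, and the usual computation makes $u \simeq w$ itself $\DDelta11(U_0 \oplus U_1) = \DDelta11(\mathbf d)$. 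Hence $I_0$, $I_1$ and their complements are $\DDelta11(\mathbf d)$, and pseudo-wellfoundedness applies \emph{directly}: each complement, if nonempty, has a least element $m_i$. A short case split then finishes (both nonempty is impossible, since $\LL{m_0}{\M{U_0}} \cong \LL{m_1}{\M{U_1}}$ would put $m_0 \in I_0$; exactly one empty is your realization contradiction; both empty gives $\M{U_0} \cong \M{U_1}$). Replace the $\SSigma11$-plus-boundedness appeal with this $\DDelta11$ upgrade and your argument is complete.
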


\begin{proof}
Let $U_1, U_2 \in \SS{\nu}$ have $U_1 \eqH U_2$, and let \MM1, \MM2 stand for \MM{U_1}, \MM{U_2}. 
We'll obtain $U_1 = U_2$ by showing $\MM1 \cong \MM2$. Define a relation between `ordinals' $u \in \MM1$ and $w \in \MM2$\,:
\begin{align*}
        	u \simeq w 			& 		\LEquiv  	\Exists{f} ( f : \LL{u}{\MM1} \isomap \LL{w}{\MM2} ).
\intertext{%
        Set $\bI_1 = \Dom(\simeq)$, and $\bI_2 = \Img(\simeq)$. $\bI_1$ and $\bI_2$ are initial segments of $\On^\MM1$ and $\On^\MM2$, respectively. 
        Using Lemma \ref{lem:Rigidity-ppty}, the relation "$u \simeq w$" defines a bijection $\bI_1 \to \bI_2$ which is, indeed, the restriction of an isomorphism:
}%
        \sF : \medcup_{u \in \bI_1}   \LL{u}{\MM1} 
        				\nudge{4pt}{&}	\isomap 	\medcup_{w \in \bI_2} \LL{w}{\MM2}.
\intertext{Note that, by the same lemma,}
		u \simeq w			& 		\LEquiv 	\Exists{!f}( f : \LL{u}{\MM1} \isomap \LL{w}{\MM2} ).
\intertext{The RHS here reads: $\Exists{!f} \cI(f, U_1, u, U_2, w)$, where \cI is a \DDelta11 predicate, hence:}
        	u \simeq w			& 		\LEquiv 	\Exists{f \hypr U_1 \oplus U_2}(f : \LL{u}{\MM1} \isomap \LL{w}{\MM2}).
\end{align*}
By the standard computation, the relation "$u \simeq w$" is $\DDelta11(U_1 \oplus U_2)$ $[\, = \DDelta11(U_1) = \DDelta11(U_2)]$. 
Consequently, $\bI_1$ and $\bI_2$ are also $\DDelta11(U_1)$ $[\, = \DDelta11(U_2)]$. 
$\MM1,\,  \MM2$ being pseudo-wellfounded, $\On^\MM1 - \bI_1$ and $\On^\MM2 - \bI_2$ each, if nonempty, has a minimum. 
Denote $m_1, m_2$ the respective potential minima, and consider the cases:
\begin{itemize}[left=3pt, topsep=2pt, itemsep=2pt, label={--}]
    \item	$\On^\MM1 - \bI_1$ and $\On^\MM2 - \bI_2$ are both nonempty. 
                This isn't possible, as $\sF$ would be the isomorphism $\sF : \LL{m_1}{\MM1} \isomap \LL{m_2}{\MM2}$, 
                entailing $m_1 \in \bI_1$ and $m_2 \in \bI_2$.

    \item 	$\bI_1 = \On^\MM1$ and $\On^\MM2 - \bI_2 \ne \0$. 
                Here $\MM1 = \medcup_{u \in \bI_1} \LL{u}{\MM1}$, and  thus $\sF : \MM1 \isomap \LL{m_2}{\MM2}$. 
                $U_1$ is now the theory of \LL{m_2}{\MM2}, hence it is realized in \MM2. 
                Since $U_2 \eqH U_1$, by Prop.~\ref{prop:Realized-reals}(1), $U_2$ is also realized in \MM2 (that's \MM{U_2}). 
                This contradicts (3) of the same proposition.

    \item 	The third case, symmetric of the previous one, is equally impossible.

    \item 	The remaining case: $\bI_1 = \On^\MM1$ and $\bI_2 = \On^\MM2$. 
               Here $\MM1 = \medcup_{u \in \bI_1} \LL{u}{\MM1}$ and $\MM2 = \medcup_{w \in \bI_2} \LL{w}{\MM2}$,
               thus $\sF : \MM1 \isomap \MM2$ is the desired isomorphism.\qedhere
\end{itemize}
\end{proof}

\begin{proof}[\textbf{Proof of Theorem \ref{thm:Models-with-Alephs}}]
Our hypothesis is \WTDet{\rho}{\DDelta11}, and we may work entirely in \bL. 

Fix any $\nu < \wCK$, towards a transitive model of \ZFmAleph{\nu}.

\textsc{Claim.} There is a limit ordinal $\lambda$, such that: \Aleph{\nu+1} exists in \L{\lambda}.

Suppose no such $\lambda$ exists. 
It follows that for all admissible $\alpha > \omega$, $\L{\alpha} \models \TT{\nu}$. This entails that \SS{\nu} is Turing-cofinal: 
indeed, since $\bV = \bL$, using Prop.~\ref{prop:Cofinal-term-models}, given $x \subs \omega$ there is an $\alpha > \omega$, admissible, 
such that $x \in \L{\alpha}$ and $\MM{\ThL{\alpha}} \cong \L{\alpha}$. 
Thus $x \rec \ThL{\alpha}$ and, \MM{\Th(\L{\alpha})} being wellfounded, $\ThL{\alpha} \in \SS{\nu}$. 

Invoking now \WTDet{\rho}{\DDelta11} and Theorem~\ref{thm:WTDelta->WTSigma}, \WTDet{\rho}{\SSigma11} holds. 
Hence, there are distinct $U_1, U_2 \in \SS{\nu}$ such that $U_1 \eq{\rho} U_2$, contradicting the previous proposition.
\qed\textsubscript{\,\textsc{Claim}}

Let now $\lambda$ be as claimed, and set $\mu = \AlephL{\nu+1}{\lambda}$. 
In \L{\lambda}, $\mu$ is a successor \cardinal hence, by Prop.~\ref{prop:Folklore}, $\L{\mu} \models \ZFm$. 
Further, for $\xi \le \nu$,  $\AlephL{\xi}{\lambda} < \mu$ and \AlephL{\xi}{\lambda} is an \L{\mu}\nb-cardinal (now in the usual sense), 
hence $\L{\mu} \models \ZFmAleph{\nu}$.
\end{proof}

Note the following byproduct of the previous proposition, and the proof just given 
(substituting $U_1 \eqH U_2$ for $U_1 \eq{\rho} U_2$, in the proof) 
\dash in contradistinction to Remark~\ref{rem:Too-Weak}.

\begin{theorem}
\label{thm:Models-with-Alephs-from-Sigma}
    Assume every Turing-cofinal \SSigma11 set of reals has two Turing distinct elements $x, y$, such that $x \eqH y$. 
    For every $\nu < \wCK$, there is a transitive model\up: $M \models \ZFmAleph{\nu}$.\qed
\end{theorem}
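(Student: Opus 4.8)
The plan is to run the proof of Theorem~\ref{thm:Models-with-Alephs} essentially verbatim, with a single change: the present hypothesis is applied \emph{directly} to the \SSigma11 set \S{\nu}, so that neither the lifting Theorem~\ref{thm:WTuringDelta->WTuringSigma} nor \DDelta{0}{\rho}-equivalence plays any role, hyperarithmetic equivalence $\equiv_h$ taking the place of $\equiv_\rho$. As there, I would first reduce to $\bV = \bL$ and fix $\nu < \wCK$, noting that the entire conclusion is concentrated in the Claim that some limit $\lambda$ has \Aleph{\nu+1} existing in \L{\lambda}. Once the Claim is secured, setting $\mu = \AlephL{\nu+1}{\lambda}$ and quoting Proposition~\ref{prop:Folklore} gives $\L{\mu} \models \ZFm$, and the observation that each \AlephL{\xi}{\lambda} with $\xi \le \nu$ lands in \L{\mu} as a genuine \Lcard{\mu} promotes this to $\L{\mu} \models \ZFmAleph{\nu}$. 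This tail of the argument is left untouched.

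For the Claim I would argue by contradiction. If no such $\lambda$ exists, then every admissible $\alpha > \omega$ satisfies $\L{\alpha} \models \T{\nu}$, and Proposition~\ref{prop:Countable-admissible-term-model} makes \S{\nu} cofinal in the degrees: given $x \subs \omega$, pick an admissible $\alpha > \omega$ with $x \in \L{\alpha}$ and $\M{\Th(\L{\alpha})} \cong \L{\alpha}$, so that $x \rec \Th(\L{\alpha})$ and $\Th(\L{\alpha}) \in \S{\nu}$. This is verbatim as in Theorem~\ref{thm:Models-with-Alephs}.

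The one divergent step is the final appeal. Since \S{\nu} is itself \SSigma11 (as recorded when it was introduced) and, under the contradiction hypothesis, cofinal in the degrees, I would feed it straight into the hypothesis of the present theorem, obtaining two Turing distinct $U_1,\,U_2 \in \S{\nu}$ with $U_1 \equiv_h U_2$. But $U_1 \equiv_h U_2$ means $U_1$ and $U_2$ have equal hyperdegree, while being Turing distinct forces $U_1 \ne U_2$; this flatly contradicts the sparseness proposition proved just above, that distinct members of \S{\nu} have distinct hyperdegrees. The Claim, and with it the theorem, follows.

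I anticipate no genuine obstacle. Every ingredient inherited from Theorem~\ref{thm:Models-with-Alephs} --- the term-model construction, Lemma~\ref{lem:Rigidity-Property}, the sparseness proposition, and Proposition~\ref{prop:Folklore} --- is already phrased in terms of hyperdegrees, and the sparseness proposition never uses more than $\equiv_h$; indeed, in the proof of Theorem~\ref{thm:Models-with-Alephs} the $\equiv_\rho$-pair is only ever exploited as a $\equiv_h$-pair, so handing over a $\equiv_h$-pair from the outset changes nothing. The single point deserving care --- and the only place where this hypothesis could fail to deliver --- is that, because the assumption is now imposed on \SSigma11 rather than \DDelta11, it must be applied to \S{\nu} itself and not to a \DDelta11 superset: the corresponding $\equiv_h$ statement for \DDelta11 sets yields nothing new, being provable in Analysis, as Remark~\ref{rem:Too-Weak} warns. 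Thus the whole proof rests on the single observation that \S{\nu} is \SSigma11 and cofinal, whereupon the hypothesis bites directly.
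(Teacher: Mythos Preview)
Your proposal is correct and matches the paper's own argument essentially verbatim: the paper presents this theorem as an immediate byproduct of the sparseness proposition and the proof of Theorem~\ref{thm:Models-with-Alephs}, obtained by substituting $U_1 \equiv_h U_2$ for $U_1 \equiv_\rho U_2$ and applying the hypothesis directly to the \SSigma11 set \S{\nu}. Your observation that the lifting step via Theorem~\ref{thm:WTuringDelta->WTuringSigma} is bypassed, and your remark contrasting this with the weakness of the \DDelta11 version noted in Remark~\ref{rem:Too-Weak}, are exactly the points the paper highlights.
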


An easy consequence of the main result: \WTDet{\rho}{\DDelta11} implies full \DDelta11 determinacy. 
The proof proceeds via Martin's Borel determinacy theorem: 
no direct argument is known for this sort of implication \dash apparently first observed by Friedman for \TDet{\DDelta11}.

\begin{theorem}
\label{thm:Full-Det}
    For $2 \le \rho < \wCK$, \WTDet{\rho}{\DDelta11} implies \Det{\DDelta11}.
\end{theorem}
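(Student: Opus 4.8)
The plan is to fix a single \DDelta11 set $A \subs \cN$ and show $G_\omega(A)$ is determined, then quantify over $A$; no direct argument is attempted, and instead, following Friedman's observation for \TDet{\DDelta11}, I would route everything through Martin's proof of Borel determinacy \cite{Martin:Borel_Det} using the transitive models furnished by Theorem~\ref{thm:Models-with-Alephs}. First I would record that, being lightface, $A$ carries a \DDelta11 Borel code; since the \DDelta11 Borel codes are exactly the hyperarithmetic ones, $A$ is Borel of some recursive rank $\nu < \wCK$. Fix such a code $c$ and rank $\nu$.

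Next I would manufacture a transitive set in which the determinacy of the rank-$\nu$ game is \emph{provable}. Apply Theorem~\ref{thm:Models-with-Alephs} with $\nu$ replaced by $\nu+k$ — still below \wCK, for a fixed finite $k$ read off from Martin's construction — to obtain a transitive \ModelZFmAleph{M}{\nu+k}. Crucially, the proof of that theorem delivers $M = \L{\mu}$ with $\mu = \AlephL{\nu+k+1}{\lambda}$, and since $\L{\mu} \models \ZFm$ while $\mathbb{L}_{\wCK}$ does not, one has $\mu > \wCK$; hence \L{\mu} contains every hyperarithmetic real, in particular $c$, and decodes $A_c = A$ correctly (decoding a Borel code is absolute for admissible transitive sets).

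The heart of the matter is that $\L{\mu}$ \emph{proves} $G_\omega(A_c)$ is determined, and here I must finesse the absence of Power Set in \ZFm. Inside \L{\mu} one has $\bV = \bL$, hence GCH, so the constructible power-set operation sends a set of size \Aleph{\xi} to one of size \Aleph{\xi+1}; thus the $\le \nu + k$ iterated power sets that Martin's unravelling of a rank-$\nu$ game requires all have size below $\mu = \AlephL{\nu+k+1}{\lambda}$ and therefore occur as genuine \emph{elements} of \L{\mu}. Since $\L{\mu} \models \ZFm$ supplies the Replacement needed to carry the covering recursion of length $\nu$, Martin's argument runs verbatim inside \L{\mu}, yielding $\L{\mu} \models {}$"$G_\omega(A_c)$ is determined", witnessed by a strategy $\sigma \in \L{\mu}$ for one of the players.

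Finally I would transfer $\sigma$ to $V$ by absoluteness. The assertion that $\sigma$ wins reads $\All{y}(y \text{ follows } \sigma \Implies y \in A_c)$; as "$y \in A_c$" is \DDelta11$(c)$ and "$y$ follows $\sigma$" is arithmetic in $\sigma$, this is a \PPi11$(c \oplus \sigma)$ statement. Because \L{\mu} is a transitive model of \ZFm (hence admissible, correct about well-foundedness) containing $c$ and $\sigma$, Mostowski absoluteness (see \cite{Moschovakis:DST}) makes \PPi11 facts absolute between \L{\mu} and $V$; so the $\sigma$ that wins in \L{\mu} genuinely wins in $V$, and $G_\omega(A)$ is determined. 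Since $A$ was arbitrary, \Det{\DDelta11} follows. I expect the main obstacle to be precisely the middle step — justifying that a Power-Set-hungry proof can be executed in a model of \ZFm — which the \bL\nb-computation resolves by pinning the growth of (constructible) power sets to the aleph hierarchy, so that "enough alephs" literally becomes "enough iterations of power set". Determining the exact finite offset $k$ (equivalently, the precise number of unravellings needed for rank $\nu$) is bookkeeping internal to Martin's proof, which I would cite rather than redo.
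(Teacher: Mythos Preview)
Your proposal is correct and follows essentially the same route as the paper: produce a transitive model with enough alephs via Theorem~\ref{thm:Models-with-Alephs}, run Martin's Borel determinacy inside it, and transfer the result to $V$ by absoluteness. The paper is terser---it simply cites Martin's theorem inside $M$ without your power-set bookkeeping, and invokes Mostowski absoluteness directly for the \SSigma{1}{2} statement that $G_\omega(A)$ is determined rather than extracting a strategy and arguing \PPi{1}{1} absoluteness---but the substance is identical.
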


\begin{proof}
Assume \WTDet{\rho}{\DDelta11}. 
Let $A \subs \cN$ be \DDelta11, say $A \in \SSigma{0}{\nu}$ where $\nu < \wCK$. 
Applying Theorem \ref{thm:Models-with-Alephs}, there is a transitive \ModelZFmAleph{M}{\nu}. 
Invoking (non-optimally) Martin's main result from \cite{Mar76} inside $M$, \SSigma{0}{\nu} games are determined. 
The statement "\emph{the game \game{A} is determined}" is \SSigma12. 
By Mostowki's absoluteness theorem, being true in $M$, it holds in the universe: \game{A} is indeed determined.
\end{proof}

\section{$\Delta^1_1$ determinacy and properties of $\Sigma^1_1$ sets}
\label{sec:Properties-Sigma}

We proceed now to show that \DDelta11 determinacy imparts weak determinacy properties to the class \SSigma11. 
In view of Theorem \ref{thm:Full-Det}, there is no point, here, in working from weaker hypotheses.

\begin{definition}
    The hyp-Turing cone with vertex $d \in \cD$ is the set of degrees 
    \[
        \Cone_\mathsf{h}(d) = \Cone(d) \cap \DDelta11(d) = \Set{x \in \cD}{d \rec x \And x \hypr d}.
    \]
    \HTDet{\Gamma} is the statement: \emph{Every cofinal set of degrees $A \in \Gamma$ contains a hyp-Turing~cone}.
\end{definition}

\begin{theorem}
\label{thm:Sigma-seq-degrees}
    Assume \TDet{\DDelta11}. If \Seq{S}{k} is a \SSigma11 sequence of Turing-cofinal sets of degrees, 
    then $\Medcap{S}{k} \neq \0$ \dash and, indeed, \Medcap{S}{k} contains a hyp-Turing cone.
\end{theorem}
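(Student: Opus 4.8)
The plan is to localize the problem inside a countable admissible set, where a hyp-Turing cone turns into an ordinary cone. The starting observation is that for $x \hypr d_{0}$ the reals hyperarithmetic in $d_{0}$ are exactly the reals of $\L{\omega_{1}^{d_{0}}}[d_{0}]$, and the degrees Turing-above $d_{0}$ among them are precisely the hyp-Turing cone $\Cone_{h}(d_{0})$. Moreover, since each $S_{k}$ is \SSigma11, hyperarithmetic absoluteness (a rank function witnessing \PPi11 non-membership has length below $\omega_{1}^{x}\le\omega_{1}^{d_{0}}$, hence lies in the admissible set, which is correct about wellfoundedness) shows that $\L{\omega_{1}^{d_{0}}}[d_{0}]$ decides ``$x \in S_{k}$'' correctly for every $x \hypr d_{0}$ and every $k$ simultaneously. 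Thus producing a hyp-Turing cone inside $\bigcap_{k} S_{k}$ amounts to finding a single admissible vertex $d_{0}$ such that, inside $\L{\omega_{1}^{d_{0}}}[d_{0}]$, the correctly computed sets $S_{k}$ share a cone above $d_{0}$.

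To manufacture such a $d_{0}$ I would work from \TDet{\DDelta11} through Martin's Lemma, together with the reflection principle stated above, used exactly as in Theorem~\ref{thm:WTuringDelta->WTuringSigma} to pass between \SSigma11 and \DDelta11. The decisive uniformity is that the whole sequence is coded by one real: ``$x \in S_{k}$'' is uniformly \DDelta11 in the relativized jump $\cO^{x}$, so a single hyperjump adjudicates all the $S_{k}$ at once, and a single application of determinacy can yield one \emph{common} vertex rather than a sequence of incompatible ones. This is essential, since hyp-Turing cones are \emph{not} closed under countable intersection — $x \hypr d$ does not entail $x \hypr d_{k}$ when $d_{k} <_{h} d$, so the $\bigoplus_{k} d_{k}$ trick that works for ordinary cones fails here. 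Concretely I would reflect each cofinal $S_{k}$ to a \DDelta11 superset $D_{k}$, extract Turing cones $\Cone(c_{k})\subseteq D_{k}$ by Martin's Lemma so that $\Cone(\bigoplus_{k} c_{k})\subseteq\bigcap_{k} D_{k}$, and then push the vertex up to an admissible $d_{0}\equiv_{h}\bigoplus_{k}c_{k}$ so that, on the reals hyp in $d_{0}$, the absoluteness above forces agreement of each $D_{k}$ with $S_{k}$ — the step that must be secured — giving $\Cone_{h}(d_{0}) \subseteq \bigcap_{k} S_{k}$ and in particular $\bigcap_{k} S_{k} \neq \O$.

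The \textbf{main obstacle} is precisely that agreement step, i.e. the passage from \SSigma11 to \DDelta11 on the correct side of the dichotomy. Martin's Lemma measures only \DDelta11 Turing-closed sets, whereas the $S_{k}$ are merely \SSigma11, and the \PPi11 error sets $D_{k}\setminus S_{k}$ (each contained in $\bar S_{k}$, hence individually free of Turing cones) need not have a cone-free union; ruling out that their union swallows a hyp-Turing cone above $d_{0}$ is essentially equivalent to the single-set case of the theorem itself, so it cannot be closed by reflection and Martin's Lemma alone and is where \TDet{\DDelta11} is genuinely consumed. It is this \SSigma11-to-\DDelta11 rigidity, and not any upward cone in a jump coordinate, that explains why the conclusion is only a \emph{hyp}-Turing cone rather than a full Turing cone: the hyp bound $x \leq_{h} d_{0}$ is an upper bound on hyperdegree, which determinacy does not supply directly and which instead emerges from working inside $\L{\omega_{1}^{d_{0}}}[d_{0}]$. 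Verifying the attendant complexity computations — that the relevant cone-containment and cofinality predicates carry the \PPi11/\SSigma11 form required for reflection and for Martin's Lemma, via the admissibility of $\L{\omega_{1}^{d_{0}}}[d_{0}]$ — is the principal technical burden.
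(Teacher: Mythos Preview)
Your proposal identifies its own gap and does not close it. You reflect each $S_k$ separately to a $\Delta^1_1$ superset $D_k$, pull out cones, and then hope that absoluteness over $\L{\omega_1^{d_0}}[d_0]$ will force $D_k$ to agree with $S_k$ on the hyp-Turing cone. It will not: the $D_k$ produced by reflection are arbitrary $\Delta^1_1$ supersets of $S_k$, with no tie to $S_k$ beyond containment, and no amount of admissible-set correctness about $S_k$ says anything about $D_k\setminus S_k$. You diagnose this correctly, but your conclusion that the gap ``cannot be closed by reflection and Martin's Lemma alone'' is exactly wrong --- that is precisely how the paper closes it.

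The missing idea is to reflect the \emph{negation of the conclusion}, applied to the single $\Sigma^1_1$ relation $S\subseteq\omega\times\cN$ whose sections are the $S_k$, rather than to reflect the $S_k$ one at a time. The statement ``$\bigcap_k S_k$ contains no hyp-Turing cone'' reads
\[
\forall x\ \exists y\leq_h x\ \bigl(x\leq_T y\ \wedge\ \exists k\ (k,y)\notin S\bigr),
\]
and since the bounded quantifier $\exists y\leq_h x$ over a $\Pi^1_1$ matrix stays $\Pi^1_1$, this is a $\Pi^1_1$-on-$\Sigma^1_1$ property of $S$. Reflection then hands you one $\Delta^1_1$ relation $D\supseteq S$ whose intersection $\bigcap_k D_k$ \emph{still} contains no hyp-Turing cone. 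Now each $D_k$ is a cofinal $\Delta^1_1$ set of degrees (after closing under Turing equivalence), Martin's Lemma gives a full cone in each, and the join of countably many vertices puts a full Turing cone --- hence certainly a hyp-Turing cone --- inside $\bigcap_k D_k$. Contradiction. No admissible sets, no $\L{\omega_1^{d_0}}[d_0]$, no passage back from $D_k$ to $S_k$ is needed: the contradiction lives entirely at the level of $D$.
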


\begin{proof}
Let the $S_{k}$'s be given as the sections of a \SSigma11 relation $S \subs \omega \X \cN$, and assume \Medcap{S}{k} contains no hyp-Turing cone: 
$\All{x \in \cN}(\Cone_\mathsf{h}(x) \not\subs \Medcap{S}{k})$, i.e.,%
\[
        \All{x \in \cN} \Exists{y \hypr x} (x \rec y \And y \notin \Medcap{S}{k}).
\]
This is a statement $\Phi(S)$, where $\Phi(X)$ is a \PPi11 on \SSigma11 property of subsets $X \subs \omega \X \cN$.
Reflection yields a \DDelta11 relation $D \sups S$ such that $\Phi(D)$.
Shrink $D$, if need be, to ensure that its sections $D_{k}$ are Turing closed, preserving $\Phi(D)$ and $D \sups S$.
Now, $D_{k} \sups S_{k}$ and \Medcap{D}{k} contains no hyp-Turing cone.
A contradiction ensues using \TDet{\DDelta11} $\,+\,$ Martin's Lemma: each $D_{k}$, being cofinal in~\cD, contains a Turing cone hence, easily, so does \Medcap{D}{k}.
\end{proof}

The converse is immediate. Indeed, if \TDet{\DDelta11} fails, by Martin's Lemma there is a \DDelta11 set $A \subs \cD$, 
such that both $A$ and ${\sim}A$ are cofinal in \cD, and the \DDelta11 `sequence' $\langle A, {\sim}A \rangle$ has empty intersection.
Relativizing \ref{thm:Sigma-seq-degrees}, one readily gets:

\begin{corollary}
\label{thm:Intersection of Sigma cofinal}
    Assume Borel Turing determinacy. If \Seq{A}{k} is a sequence of cofinal analytic sets of Turing degrees, then \Medcap{A}{k} is cofinal in \cD.\qed
\end{corollary}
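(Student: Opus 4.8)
The plan is to derive the corollary from Theorem~\ref{thm:Sigma-seq-degrees} by relativizing to a real parameter and then restricting to a Turing cone, the latter being exactly what converts the lightface nonemptiness of the intersection into boldface cofinality.

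First I would record that the proof of Theorem~\ref{thm:Sigma-seq-degrees} relativizes verbatim to an arbitrary real $w$: the reflection principle invoked there has its evident $\PPi11(w)$-on-$\SSigma11(w)$ form, and Martin's Lemma is parameter-insensitive. This yields, for every real $w$, the relativized statement: \emph{under \TDet{\DDelta11(w)}, any $\SSigma11(w)$ sequence of sets of Turing degrees, each cofinal in \cD, has an intersection containing a hyp-Turing cone} (in particular, nonempty). Since every $\DDelta11(w)$ set is Borel, a Turing-closed $\DDelta11(w)$ game is a Borel Turing game; hence the hypothesis of Borel Turing determinacy supplies \TDet{\DDelta11(w)} for \emph{every} $w$.

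Now fix a real $z$ coding the analytic sequence, so that the relation $A \subs \omega \X \cN$ with sections $A_k$ is $\SSigma11(z)$. To prove cofinality I would fix an arbitrary real $x$ and set $w = z \oplus x$. Consider the modified sections $A_k \cap \Cone(x)$: each is Turing-closed and $\SSigma11(w)$ (as $\Cone(x)$ is $\DDelta11(x)$), and each is still cofinal in \cD — given any $u$, cofinality of $A_k$ yields $y \in A_k$ with $u \oplus x \rec y$, whence $x \rec y$, so $y \in A_k \cap \Cone(x)$ and $u \rec y$. Thus $\big(A_k \cap \Cone(x)\big)_{k}$ is an $\SSigma11(w)$ sequence of cofinal sets of degrees, and \TDet{\DDelta11(w)} holds, so by the relativized theorem $\medcap_{k}\big(A_k \cap \Cone(x)\big)$ is nonempty. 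Any $y$ in it satisfies $x \rec y$ together with $y \in \Medcap{k}{A}$; as $x$ was arbitrary, \Medcap{k}{A} is cofinal in \cD.

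The whole reduction is routine once Theorem~\ref{thm:Sigma-seq-degrees} is in place; the only point meriting attention — and really the single idea of the argument — is that passing to $A_k \cap \Cone(x)$ forces every member of the intersection to lie above $x$, so that the theorem's guarantee of a merely \emph{nonempty} intersection already delivers \emph{cofinality}. Absorbing $x$ into the parameter $w$ is what keeps the restricted sequence within reach of the relativized hypothesis, and no control over the vertex of the hyp-Turing cone produced by Theorem~\ref{thm:Sigma-seq-degrees} is needed.
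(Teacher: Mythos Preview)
Your proposal is correct and is essentially the relativization the paper indicates: you absorb a code $z$ for the analytic sequence and an arbitrary $x$ into a parameter $w=z\oplus x$, use Borel Turing determinacy to secure \TDet{\DDelta11(w)}, and invoke the $w$-relativized Theorem~\ref{thm:Sigma-seq-degrees}. Your device of replacing $A_k$ by $A_k\cap\Cone(x)$ is a clean way to force the intersection above $x$; one could alternatively note that the relativized hyp-Turing cone $\Cone(d_0)\cap\DDelta11(w,d_0)$ already contains $d_0\oplus x$ (since $x\rec w$), but this is a cosmetic difference, not a different approach.
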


An interesting special case of \ref{thm:Sigma-seq-degrees}, where the `sequence' $(S_{k})_{k<1}$ is a single \SSigma11 term.

\begin{theorem}
\label{thm:Hyp-Turing-Det-Sigma}
    \TDet{\DDelta11} implies \HTDet{\SSigma11}.\qed
\end{theorem}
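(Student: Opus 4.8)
The plan is to obtain this as the promised degenerate case of Theorem~\ref{thm:Sigma-seq-degrees}, applied to a constant sequence. Assume \TDet{\DDelta11}, and let $S \subs \cD$ be an arbitrary \SSigma11 set of degrees cofinal in \cD; I must produce a hyp-Turing cone contained in $S$. The idea is to view $S$ as the single repeated term of a length-$\omega$ sequence, so that the already-established conclusion of Theorem~\ref{thm:Sigma-seq-degrees} \dash that the intersection of the terms contains a hyp-Turing cone \dash delivers exactly what is wanted, since the intersection of a constant sequence collapses to $S$.

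Concretely, I would set $S_k = S$ for every $k < \omega$. To invoke Theorem~\ref{thm:Sigma-seq-degrees} I must check its two hypotheses. First, $(S_k)_{k<\omega}$ is genuinely a \SSigma11 sequence of sets of degrees: if $S$ is defined by a \SSigma11 formula, then the relation $S' \subs \omega \X \cN$ given by $S'(k,x) \Equiv x \in S$ is \SSigma11 uniformly in $k$, and each section $S'_k = S$ is Turing closed because $S$ is identified with a Turing-closed set of reals. Second, each term $S_k = S$ is cofinal in \cD, which is precisely the standing hypothesis on $S$. Theorem~\ref{thm:Sigma-seq-degrees} then applies and yields that $\Medcap{k}{S}$ contains a hyp-Turing cone. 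But for a constant sequence $\Medcap{k}{S} = S$, so $S$ itself contains a hyp-Turing cone. As $S$ was an arbitrary cofinal \SSigma11 set of degrees, this establishes \HTDet{\SSigma11}.

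I do not anticipate any real obstacle: all the work has already been carried out in the proof of Theorem~\ref{thm:Sigma-seq-degrees} \dash the reflection step producing a \DDelta11 over-approximation $D \sups S$ with the relevant \PPi11-on-\SSigma11 property, and the appeal to Martin's Lemma under \TDet{\DDelta11}. The only points left to verify are the two trivial bookkeeping facts above, namely that the constant sequence is a legitimate \SSigma11 sequence and that its intersection is $S$. For completeness one could instead re-run the reflection argument directly on the single set $S$ \dash reflecting the \PPi11-on-\SSigma11 statement ``$\forall x\, \exists y \hypr x\,(x \rec y \And y \notin S)$'' to a \DDelta11, Turing-closed $D \sups S$ and deriving a contradiction with \TDet{\DDelta11} \dash but specializing Theorem~\ref{thm:Sigma-seq-degrees} is the cleaner route and is exactly the reading suggested by the paper, which is why the statement is recorded with \qed and no separate argument.
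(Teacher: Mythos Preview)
Your proposal is correct and is exactly the argument the paper intends: the theorem is recorded with a bare \qed precisely because it is the degenerate case of Theorem~\ref{thm:Sigma-seq-degrees} applied to the constant sequence $S_k = S$. The bookkeeping checks you spell out (that the constant sequence is a legitimate \SSigma11 sequence and that its intersection is $S$) are the only content, and your optional direct reflection argument is just the proof of Theorem~\ref{thm:Sigma-seq-degrees} specialized to a single term.
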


In view of Theorem \ref{thm:Full-Det}, the implication is an equivalence. A similar result obtains for full determinacy, as well:

\begin{definition}
    For a game \game{A}, a strategy $\sigma$ for \pI is called a hyp-winning strategy if \,$\All{\tau \hypr \sigma}(\sigma{\ast}\tau \in A)$,
    i.e., applying $\sigma$, \pI wins against any $\DDelta11(\sigma)$ sequence of moves by \pII.
\end{definition}

\begin{theorem}
\label{thm:Hyp-Det-Sigma}
    Assume \Det{\DDelta11}. For $S \in \SSigma11$, one of the following holds for \game{S},
    \begin{enumerate}
        \item	\pI has a hyp-winning strategy.
        \item	\pII has a winning strategy.
    \end{enumerate}
\end{theorem}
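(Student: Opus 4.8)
The plan is to prove the dichotomy by the reflection method already used in Theorem~\ref{thm:Sigma-seq-degrees}, pushing the failure of clause~(1) down to a \DDelta11 set, where \Det{\DDelta11} becomes available. For $X \subs \cN$ put
\[
    \Phi(X) \Equiv \All{\sigma}\Exists{\tau \hypr \sigma}(\sigma * \tau \notin X),
\]
i.e.\ \emph{Player \pI has no hyp-winning strategy in $G_\omega(X)$}. The scheme is then: (a) verify that $\Phi$ is \PPi11 \emph{on} \SSigma11; (b) if clause~(1) fails for the given \SSigma11 set $S$, then $\Phi(S)$ holds, so the reflection principle (Section~1) furnishes a \DDelta11 set $D \sups S$ with $\Phi(D)$; (c) conclude via \Det{\DDelta11} that \pII wins $G_\omega(D)$, and hence $G_\omega(S)$.

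Step~(a) is the crux. For a \SSigma11 relation $U \subs \cN \X \cN$,
\[
    \Phi(U_x) \Equiv \All{\sigma}\Exists{\tau \hypr \sigma}\big((x, \sigma * \tau) \notin U\big).
\]
As $(\sigma,\tau) \mapsto \sigma * \tau$ is recursive and $U$ is \SSigma11, the matrix $(x, \sigma * \tau) \notin U$ is \PPi11 in $(x,\sigma,\tau)$. The delicate point is that prefixing a \PPi11 matrix with the hyperarithmetic quantifier $\Exists{\tau \hypr \sigma}$ keeps the relation \PPi11: by the (relativized) Spector--Gandy theorem the matrix can be written $\Exists{\rho \hypr x \oplus \sigma \oplus \tau} R$ with $R$ arithmetic; since $\tau \hypr \sigma$ gives $x \oplus \sigma \oplus \tau \equiv_h x \oplus \sigma$, the two hyperarithmetic quantifiers coalesce into a single $\Exists{\mu \hypr x \oplus \sigma}$ over an arithmetic matrix, which is \PPi11 in $(x,\sigma)$ by Spector--Gandy once more. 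The outer $\All{\sigma}$ then preserves \PPi11, so \Set{x}{\Phi(U_x)} is \PPi11, establishing that $\Phi$ is \PPi11 on \SSigma11. This hyperarithmetic bookkeeping — managing the $\Exists{\tau \hypr \sigma}$ quantifier — is the main obstacle, though it is entirely routine.

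Granting~(a), the rest is immediate. Assume clause~(1) fails; by~(b) we have a \DDelta11 set $D \sups S$ with $\Phi(D)$, and by \Det{\DDelta11} the game $G_\omega(D)$ is determined. A genuine winning strategy for \pI would, \emph{a fortiori}, be hyp-winning, since it defeats \emph{every} play by \pII, in particular every $\tau \hypr \sigma$; as $\Phi(D)$ says \pI has no hyp-winning strategy, \pI has no winning strategy in $G_\omega(D)$. Hence \pII has a winning strategy $\tau^*$ in $G_\omega(D)$. Because $S \subs D$, every play consistent with $\tau^*$ falls outside $D$, hence outside $S$, so $\tau^*$ is a winning strategy for \pII in $G_\omega(S)$ — clause~(2). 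Thus one of (1), (2) always holds. As usual the argument is stated lightface and relativizes to an arbitrary real parameter.
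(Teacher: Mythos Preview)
Your argument is precisely the paper's: reflect the \PPi11-on-\SSigma11 property ``Player~\pI has no hyp-winning strategy'' from $S$ to a \DDelta11 superset $D$, then invoke \Det{\DDelta11} to hand \pII a winning strategy in $G_\omega(D)$, hence in $G_\omega(S)$; the paper compresses your step~(a) into the phrase ``As above''. One slip in that step: when you coalesce the two hyperarithmetic quantifiers into a single $\Exists{\mu \hypr x \oplus \sigma}$, the constraint $\tau \hypr \sigma$ cannot be discarded in favour of the weaker $\tau \hypr x \oplus \sigma$ \dash doing so computes a different predicate, not $\Phi(U_x)$, so you would be verifying that the wrong thing is \PPi11. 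Keep $\tau \hypr \sigma$ as a conjunct; the matrix is then \PPi11 rather than arithmetic, but $\Exists{\mu \in \DDelta11(z)}$ applied to a \PPi11 matrix remains \PPi11 (Kleene's HYP-quantification theorem), and the conclusion stands.
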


\begin{proof}
Say $S \in \SSigma11$, and \pI has no hyp-winning strategy for \game{S}:
$\All{\sigma} \Exists{\tau \hypr \sigma}(\sigma\ast\tau \notin S)$.
Much as in the proof of \ref{thm:Sigma-seq-degrees}, Reflection yields a \DDelta11 set $D \sups S$ such that 
\pI has no hyp-winning strategy for \game{D}, hence no winning strategy.
Invoking \Det{\DDelta11}, \pII has a winning strategy for \game{D} which is, \emph{a~fortiori}, winning for \game{S}.
\end{proof}

\section{Appendix}

The point of the present section is to sketch a proof of Proposition~\ref{prop:Folklore}, without dissecting the \bL~construction 
\dash albeit with a recourse to admissible sets. Finer results most certainly hold.

\cF is the set of formulas, $\cF \in \L{\omega+1}$, and $\models_\L{\alpha}$ is the satisfaction relation for \L{\alpha},
\[
        {\models_\L{\alpha}} (\phi,\vec{s}) \LEquiv \phi \in \cF \And \vec{s} \in \LL{\alpha}{<\omega} \And \L{\alpha} \models \phi[\vec{s}].
\]

Apart from the classic Condensation Lemma (see \ref{subsec:Constructibility}), we shall need the following familiar~result:\\
$(\ast)$~~\emph{For any limit $\lambda > \omega$, and $\beta < \lambda$, $\models_\L{\beta} \in \L{\lambda}$}. See \cite[§7.1]{Wes00}.

\begin{notation}
    Let $X \surj{\lambda} Y$ abbreviate $\Exists{f \In \L{\lambda}}(f : X \onto Y)$, where `$\onto$' stands for surjective map.
\end{notation}

Recall: Here, "$\mu$ is an \Lcard{\lambda}" means: "for no $\xi < \mu$, does $\xi \surj{\lambda} \mu$" (see~\ref{def:cardinality}).

\begin{lemma}
    Let $\lambda > \omega$ be limit. For\/ $0 < \alpha \le \gamma < \lambda$, 
    if\/ $\L{\beta} = \tHull{\gamma}(\alpha)$, then $\seq\alpha \surj{\lambda} \beta$.
\end{lemma}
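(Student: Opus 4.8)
The plan is to represent $\beta$ as the collapse image of the ordinals of $\hull{\gamma}(\alpha)$ and to build a single surjection $\seq{\alpha} \onto \beta$ inside $\L{\lambda}$ as a composite of three constructible maps: a term-evaluation surjection onto the hull, the transitive collapse, and a pairing reduction that absorbs the (countably many) formulas.

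First I would produce an evaluation surjection onto the hull. By the familiar fact quoted just above, $\models_{\L{\gamma}} \in \L{\lambda}$, since $\gamma < \lambda$ and $\lambda > \omega$ is a limit. Using this satisfaction relation as a parameter, define $F \colon \cF \X \seq{\alpha} \to \L{\gamma}$ by letting $F(\phi, \vec{a})$ be the unique $x$ with $\L{\gamma} \models \phi[x, \vec{a}]$ when such an $x$ exists and is unique, and $F(\phi, \vec{a}) = 0$ otherwise. Directly from the definition of definability-with-parameters, the range of $F$ is exactly $\hull{\gamma}(\alpha)$. The essential point is that $F \in \L{\lambda}$: it is a set of pairs from $\L{\gamma}$, defined from the set parameters $\models_{\L{\gamma}}$, $\L{\gamma}$, $\cF$, $\seq{\alpha}$ by a formula whose quantifiers are bounded by $\L{\gamma}$; since all these parameters lie in some $\L{\delta}$ with $\delta < \lambda$, we get $F \in \L{\delta+1} \subs \L{\lambda}$.

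Next I would push $F$ through the collapse. Let $\pi \colon \hull{\gamma}(\alpha) \isomap \L{\beta}$ be the transitive collapse (so $\L{\beta} = \thull{\gamma}(\alpha)$, as hypothesized); $\pi$ is definable from $\hull{\gamma}(\alpha)$ and $\in$, hence $\pi \in \L{\lambda}$ as well. As $\pi$ carries the ordinals of $\hull{\gamma}(\alpha)$ onto $\On \cap \L{\beta} = \beta$, the map $G \colon \cF \X \seq{\alpha} \to \beta$ defined by $G(\phi, \vec{a}) = \pi(F(\phi, \vec{a}))$ whenever that value is an ordinal below $\beta$, and $G(\phi, \vec{a}) = 0$ otherwise, is a surjection onto $\beta$ and lies in $\L{\lambda}$.

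Finally I would absorb the formula factor. Since $\seq{\alpha}$ is infinite and $\cF$ countable, $\cF \X \seq{\alpha}$ and $\seq{\alpha}$ have the same cardinality, and the canonical $<_\bL$-pairing yields a surjection $p \colon \seq{\alpha} \onto \cF \X \seq{\alpha}$ built well below $\lambda$ (both sets already belong to $\L{\alpha+\omega}$, and $\alpha + \omega < \lambda$). Then $G \circ p \colon \seq{\alpha} \onto \beta$ witnesses $\seq{\alpha} \surj{\lambda} \beta$. I expect the only delicate step to be the bookkeeping that fixes a single level $\delta < \lambda$ by which $\models_{\L{\gamma}}$, $F$, $\pi$ and $p$ have all appeared; this rests on the quoted fact $\models_{\L{\gamma}} \in \L{\lambda}$ together with the standard observation that, for limit $\lambda$, any set first-order definable over an initial segment $\L{\delta}$ by a formula with quantifiers bounded in $\L{\delta}$ already belongs to $\L{\delta+1}$. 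The remaining points — surjectivity of $F$ onto the hull, and the cardinality computation absorbing $\cF$ — are routine.
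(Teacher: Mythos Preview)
Your argument is correct and follows the same line as the paper: build a surjection $\cF \times \seq{\alpha} \onto \beta$ out of a satisfaction predicate already in $\L{\lambda}$, then absorb the countable factor $\cF$ into $\seq{\alpha}$. The one difference is that the paper first notes $\L{\beta} = \hull{\beta}(\alpha)$ (immediate, since the collapse fixes $\alpha$ pointwise and is elementary) and then works directly with $\models_{\L{\beta}}$, so the surjection onto $\beta$ is read off without ever handling the collapse map. Your detour through $\pi$ is fine, but the justification ``$\pi$ is definable from $\hull{\gamma}(\alpha)$ and $\in$, hence $\pi \in \L{\lambda}$'' is a little thin: $\lambda$ is merely a limit ordinal, not admissible, so Mostowski collapse is not automatically available internally. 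The cleanest way to place $\pi$ in $\L{\lambda}$ is to describe its graph by matching defining formulas over parameters from $\alpha$ using $\models_{\L{\gamma}}$ and $\models_{\L{\beta}}$ --- at which point you have essentially rediscovered the paper's shortcut, and the composite $G = \pi \circ F$ is just the evaluation map built from $\models_{\L{\beta}}$ directly.
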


\begin{proof}
Observe that $\L{\beta} = \Hull{\beta}(\alpha)$, and $\beta < \lambda$. In \L{\beta}, every $\xi < \beta$ is the unique solution of some formula $\phi(v, \vec{\*\eta})$, where $\vec{\eta} \in \seq\alpha$.
Thus, using $\models_\L{\beta} \in \L{\lambda}$, per $(\ast)$ above, one readily derives $\cF \X \seq\alpha \surj{\lambda} \beta$.
Using an injection $\cF \X \seq\alpha \to \seq\alpha$ in \L{\lambda}, one gets $\seq\alpha \surj{\lambda} \beta$.
\end{proof}

\begin{proposition}
\label{prop:Appendix-prop}
    Let $\lambda > \omega$ be a limit ordinal, and $\omega < \mu < \lambda$, an \Lcard{\lambda}.
    \begin{enumerate}
        \item	For $0 < \alpha < \mu \le \gamma < \lambda$, and\/ $\L{\beta} = \tHull{\gamma}(\alpha)$\,\up: $\beta < \mu$ 
                        \;\up(this is a \,$\downarrow$-Löwenheim-Skolem~property\up).
        \item 	$\mu$ is admissible.
    \end{enumerate}
\end{proposition}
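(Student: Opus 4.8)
The plan is to prove (2) first, directly from the preceding Lemma, and then to deduce (1) from (2) together with Gödel's Condensation Lemma. Note at the outset that $\mu$, being an infinite \Lcard{\lambda}, is a limit ordinal (for infinite $\beta$ there is a constructible surjection $\beta \onto \beta+1$, so successors are not cardinals). Hence the basic axioms of \KP \dash extensionality, pairing, union, foundation and $\Delta_0$-separation \dash already hold in \L{\mu}, and the only substantive point for (1) will be $\Delta_0$-collection. For (2), suppose $\alpha < \mu$, $\alpha \le \gamma < \lambda$ and $\L{\beta} = \thull{\gamma}(\alpha)$. The Lemma gives $\seq{\alpha} \surj{\lambda} \beta$. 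Since $|\seq{\alpha}| = |\alpha|$, the standard constructible coding of finite sequences (which lives at a level well below $\lambda$) yields $\xi \surj{\lambda} \seq{\alpha}$ with $\xi = \max(\alpha,\omega) < \mu$; composing, $\xi \surj{\lambda} \beta$. Were $\beta \ge \mu$, restricting this surjection to the preimage of $\mu$ and extending it arbitrarily on the rest of $\xi$ would give $\xi \surj{\lambda} \mu$ with $\xi < \mu$, contradicting that $\mu$ is an \Lcard{\lambda}. Therefore $\beta < \mu$.

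For (1) it remains to verify $\Delta_0$-collection in \L{\mu}. Fix $a \in \L{\mu}$ and a $\Delta_0$ formula $\theta$ (parameters suppressed) with $\L{\mu} \models \All{x \in a}\Exists{y}\theta(x,y)$. Because $\L{\mu} = \hull{\mu}(\mu)$, the set $a$ is definable in \L{\mu} from finitely many ordinals below $\mu$; as $\mu$ is a limit these lie below a single $\alpha_1 < \mu$, so $a \in \hull{\mu}(\alpha_1)$. Set $X = \hull{\mu}(\alpha_1)$. Since \L{\mu} carries its canonical definable wellordering, $X$ is closed under the induced definable Skolem functions, whence $X \prec \L{\mu}$; by Condensation its transitive collapse is $\L{\beta} = \thull{\mu}(\alpha_1)$, and part (2) (with $\gamma = \mu < \lambda$) gives $\beta < \mu$. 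The collapse map $\pi$ fixes $\L{\alpha_1}$ pointwise, being transitive and contained in $X$, so $\pi(a) = a$. Elementarity transports $\All{x\in a}\Exists{y}\theta(x,y)$ from \L{\mu} to $X$, and $\pi$ carries it to $\L{\beta} \models \All{x\in a}\Exists{y}\theta(x,y)$. Thus every $x \in a$ has a witness $y \in \L{\beta}$, and since $\theta$ is $\Delta_0$, hence absolute between the transitive sets $\L{\beta} \subs \L{\mu}$, these $y$ witness $\theta$ in \L{\mu} as well. As $\beta < \mu$ and $\mu$ is a limit, $b = \L{\beta} \in \L{\mu}$ is a collecting set; this establishes $\Delta_0$-collection, so $\L{\mu} \models \KP$ and $\mu$ is admissible.

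The routine half is (2); the delicate bookkeeping is in (1), where one must first absorb the set parameter $a$ into a single ordinal $\alpha_1 < \mu$ so that part (2) applies to the hull of an \emph{ordinal}, and then confirm that the collapse fixes $a$ so that the collection statement descends verbatim to \L{\beta}. The only genuine content is that the hull cannot climb up to $\mu$ \dash which is precisely part (2), i.e.\ the cardinality of $\mu$ in \L{\lambda} forbidding a surjection from below; everything else is the familiar Skolem-hull/Condensation packaging, together with the minor verification that finite-sequence coding on $\alpha$ is available inside \L{\lambda}.
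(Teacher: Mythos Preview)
Your argument for (1) from (2) is essentially the paper's, but there is a genuine gap in your proof of (2). You assert that ``the standard constructible coding of finite sequences \dots\ yields $\xi \surj{\lambda} \seq{\alpha}$ with $\xi = \max(\alpha,\omega)$''. This is the crux, and it is not justified. G\"odel pairing gives a bijection $\kappa \to \seq{\kappa}$ only for ordinals $\kappa$ closed under pairing; an arbitrary $\alpha$, or even an arbitrary \Lcard{\lambda}, need not be. Reducing to $\kappa = \Card^{\L{\lambda}}(\alpha)$ helps only if you already know $\kappa \surj{\lambda} \seq{\kappa}$, and for $\kappa > \omega$ that is exactly the point at issue. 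The paper handles this by proving (1) and (2) \emph{simultaneously by induction on $\mu$}: it sets $\xbar\mu = \min_{\eta \le \mu}(\seq{\eta} \surj{\lambda} \mu)$, observes $\xbar\mu$ is an \Lcard{\lambda} $> \omega$, and if $\xbar\mu < \mu$ invokes the induction hypothesis to conclude $\xbar\mu$ is admissible, whence the $\L{\xbar\mu}$-definable bijection $\xbar\mu \to \seq{\xbar\mu}$ lies in $\L{\lambda}$ and yields $\xbar\mu \surj{\lambda} \mu$, a contradiction. So $\xbar\mu = \mu$, and (2) follows. Your attempt to decouple (2) from (1) skips precisely this inductive appeal.

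A smaller point in your proof of (1): choosing $\alpha_1$ so that $a \in \hull{\mu}(\alpha_1)$ does \emph{not} give $a \in \L{\alpha_1}$, so ``$\pi$ fixes $\L{\alpha_1}$, hence $\pi(a)=a$'' is a non sequitur. The fix is trivial \dash take $\alpha_1$ large enough that $a$ and the suppressed parameters lie in $\L{\alpha_1}$ (as the paper does) \dash but as written the step is unsupported.
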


\begin{proof}
We check (1) and (2) simultaneously, by induction on $\mu$.%
\smallbreak

(1)\enspace Set $\xbar\mu = \min_{\eta \le \mu}(\seq\eta \surj{\lambda} \mu)$. 
Note that, for $\eta < \xbar\mu$,  $(\eta \surj{\lambda} \xbar\mu \Implies \seq\eta \surj{\lambda} \seq{\xbar\mu} \surj{\lambda} \mu)$, 
it~follows that $\xbar\mu$ is an \Lcard{\lambda}, and clearly $\omega < \xbar\mu \le \mu$.

We claim that $\xbar\mu = \mu$.
If $\mu = \AlephL{1}{\lambda}$, then $\xbar\mu = \mu$.
Else, if $\xbar\mu < \mu$ then, by induction, $\xbar\mu$ is admissible, yielding an \L{\xbar\mu}\nb-definable map $\xbar\mu \onto \seq{\xbar\mu}$.
Whence $\xbar\mu \surj{\lambda} \seq{\xbar\mu} \surj{\lambda} \mu$, and thus $\xbar\mu \surj{\lambda} \mu$, contradicting "$\mu$ is an \Lcard{\lambda}".%

Now, given $0 < \alpha < \mu \le \gamma < \lambda$, and $\L{\beta} = \tHull{\gamma}(\alpha)$, the previous lemma yields $\seq{\alpha} \surj{\lambda} \beta$.
Hence, since $\alpha < \xbar\mu = \mu$, $\beta < \mu$.%
\smallbreak

(2)\enspace To show that $\mu$ is admissible, only $\Delta_0$ \Ax{Collection} needs checking.

Say $\L{\mu} \models \All{x \In \*a} \Exists{y} \phi(x, y, \vec{\*p})$, where $\phi$ is $\Delta_0$, and $a, \vec{p} \in \L{\mu}$.
Pick $\alpha < \mu$ with $a,\vec{p} \in \L{\alpha}$ and set $\L{\beta} = \tHull{\mu}(\alpha)$: $\L{\beta} \models \All{x \In \*a} \Exists{y} \phi(x, y, \vec{\*p})$.
Applying (1), $\beta < \mu$, thus $b =^\up{def} \L{\beta} \in \L{\mu}$.
By $\Delta_{0}$ absoluteness, $\L{\mu} \models \All{x \In \*a} \Exists{y \In \*b} \phi(x,y,\vec{\*p})$.
\end{proof}

\begin{namedthm*}{{\normalfont\ref{prop:Folklore}.} Proposition}
    For $\lambda$ limit, $\L{\lambda} \models "\*\mu > \omega \up{ is a successor \cardinal}" \Implies {\L{\mu} \models \ZFm}$.
\end{namedthm*}

\begin{proof}
Set $\pi=$ the \cardinal preceding $\mu$ in \L{\lambda}.
We argue that $\pi$ is the largest \cardinal in \L{\mu}.
Indeed, for $\pi \le \eta < \mu$, pick $\gamma < \lambda$ such that $\Exists{f \in \L{\gamma}}(f : \pi \onto \eta)$, and set $\L{\beta} = \tHull{\gamma}(\eta+1)$.
We get $\Exists{f \in \L{\beta}}(f : \pi \onto \eta)$ and, invoking \ref{prop:Appendix-prop}(1), $\beta < \mu$.
Hence $\L{\mu} \models \Exists{f}(f : \*\pi \onto \*\eta)$.
\smallbreak

Next: $\mu$ is regular in \L{\lambda}.
The usual \ZFC proof for the regularity of infinite successors goes through here:
for each nonzero $\eta < \mu$, using $<_\L{\mu}$, select $f_\eta \in \L{\mu}$, $f_\eta : \pi \onto \eta$, and note that the sequence $(f_\eta)_{0 < \eta < \mu}$ is in $\L{\mu+1} \subs \L{\lambda}$, etc.
\smallbreak

Finally, to show $\L{\mu} \models \ZFm$:
since by \ref{prop:Appendix-prop}(2) $\mu$ is admissible, using the standard definable bijection $\mu \to \L{\mu}$,
it suffices to verify \Ax{Replacement} for class-functions $\mu \to \mu$ in \L{\mu}.

Let therefore $F : \mu \to \mu$ be \L{\mu}-definable, from parameters $\vec{p}$. 
Given a set of ordinals $S \in \L{\mu}$, $S$~is~bounded in $\mu$.
By regularity of $\mu$ in \L{\lambda}, $F[S]$ is bounded as well.
Pick $\alpha < \mu$, with $F[S] \subs \alpha$ and $S, \vec{p} \in \L{\alpha}$\,:
$F[S]$ is definable over \L{\mu} from $S, \vec{p} \in \L{\alpha}$, and $\L{\alpha} \subs \Hull{\mu}(\alpha) \prec \L{\mu}$.
Set $\L{\beta} = \tHull{\mu}(\alpha)$, applying \ref{prop:Appendix-prop}(1), $\beta < \mu$, and thus $F[S] \in \L{\beta+1} \subs \L{\mu}$.
\end{proof}

\enlargethispage{8pt}

\bibliographystyle{ams}


\end{document}